\documentclass[onefignum,onetabnum]{siamonline250211}

\headers{Conditionals Improve Conditioning}{Chowdhary, Milinanni, Chung, and Newman}
\title{Exploiting Exact Conditionals Improves Conditioning: \\ Provably Fast Mixing Time Bounds By Sampling from the Marginal}
\author{Abhijit Chowdhary\thanks{Department of Mathematics, Tufts University
  (corresponding author: \email{abhijit.chowdhary@tufts.edu};
  \email{e.newman@tufts.edu}).}
\and Federica Milinanni\thanks{Department of Industrial Engineering and Management
  Sciences, Northwestern University
  (\email{federica.milinanni@northwestern.edu}).}
\and Julianne Chung\thanks{Department of Mathematics, Emory University
  (\email{jmchung@emory.edu}).}
\and Elizabeth Newman\footnotemark[1]}

\usepackage{amsfonts,amssymb}
\usepackage{mathrsfs}
\usepackage{mathtools}
\usepackage{bm}
\usepackage{algorithm} %
\usepackage{algpseudocodex}
\usepackage{comment}
\usepackage[sort]{cite}
\usepackage{setspace}
\usepackage{makecell}
\usepackage{subcaption}
\makeatletter
\@ifpackageloaded{cleveref}{}{\usepackage{cleveref}}
\makeatother

\RequirePackage{multirow}
\RequirePackage{adjustbox}
\RequirePackage{colortbl}
\RequirePackage{import}
\RequirePackage{stackrel}
\RequirePackage{wrapfig}
\RequirePackage{xspace} %
\RequirePackage{bm}
\makeatletter
\@ifpackageloaded{hyperref}{}{\RequirePackage{hyperref}}
\makeatother

\newcommand{\thf}  {\tfrac{1}{2}}                            %
\renewcommand{\t} {^{\top}}                                %
\renewcommand{\d} {{\rm d}}                                %

\newcommand{\bigO}  [1]  {\calO\!\left(#1\right)}
\newcommand{\isocond}{\hat\kappa}

\newcommand{\bfLambda}{{\boldsymbol{\Lambda}}}

\newcommand{\bfSigma}{{\boldsymbol{\Sigma}}}

\newcommand{\bfzeta}{{\boldsymbol{\zeta}}}

\newcommand{\bfmu}{{\boldsymbol{\mu}}}

\newcommand{\bfB}{{\bf B}}

\newcommand{\bfF}{{\bf F}}

\newcommand{\bfH}{{\bf H}}
\newcommand{\bfI}{{\bf I}}

\newcommand{\bfS}{{\bf S}}

\newcommand{\bfX}{{\bf X}}
\newcommand{\bfY}{{\bf Y}}
\newcommand{\bfZ}{{\bf Z}}

\newcommand{\bfb}{{\bf b}}

\newcommand{\bfd}{{\bf d}}
\newcommand{\bfe}{{\bf e}}

\newcommand{\bfr}{{\bf r}}

\newcommand{\bfx}{{\bf x}}
\newcommand{\bfy}{{\bf y}}
\newcommand{\bfz}{{\bf z}}
\newcommand{\bfzero}{{\bf0}}

\newcommand{\calN}{\mathcal{N}}
\newcommand{\calO}{\mathcal{O}}

\newcommand{\bbC}{\mathbb{C}}

\newcommand{\bbR}{\mathbb{R}}

\newcommand{\bbZ}{\mathbb{Z}}

 \usepackage{enumitem}
\newsiamremark{remark}{Remark}
\newsiamremark{assumption}{Assumption}

\crefname{assumption}{assumption}{assumptions}
\Crefname{assumption}{Assumption}{Assumptions}
\AddToHook{env/assumption/begin}{\crefalias{theorem}{assumption}}

\usepackage{multirow}
\usepackage{booktabs}
\usepackage{tcolorbox}

\usepackage{tikz}
	\usetikzlibrary{calc}
	\usetikzlibrary{math}
	\usetikzlibrary{positioning}
	\usetikzlibrary{shapes.geometric}
	\usetikzlibrary{arrows.meta}
    	\usetikzlibrary{patterns}
	\usetikzlibrary{fit}

\usepackage{pgfplots}
    \pgfplotsset{compat=1.18}
    \usepgfplotslibrary{groupplots}
    \usepgfplotslibrary{fillbetween}
    \usepgfplotslibrary{colormaps}
\usepackage{pgfplotstable}
\usepackage{booktabs}
\usepackage{array}
\usepackage{colortbl}

\pgfdeclarelayer{background layer}
\pgfdeclarelayer{foreground layer}
\pgfsetlayers{background layer,main,foreground layer}

 \usepackage{pgfplotstable}
 \usepackage{etoolbox}

\usepackage{amsmath}
\usepackage{amsfonts}
\usepackage{mathtools}
\usepackage{bbm}

\usepackage{graphicx}
\usepackage{wrapfig}
\usepackage[dvipsnames]{xcolor}

\def\mydefb#1{\expandafter\def\csname bf#1\endcsname{\mathbf{#1}}}
\def\mydefallb#1{\ifx#1\mydefallb\else\mydefb#1\expandafter\mydefallb\fi}
\mydefallb aAbBcCdDeEfFgGhHiIjJkKlLmMnNoOpPqQrRsStTuUvVwWxXyYzZ\mydefallb

\def\mydefb#1{\expandafter\def\csname #1bb\endcsname{\mathbb{#1}}}
\def\mydefallb#1{\ifx#1\mydefallb\else\mydefb#1\expandafter\mydefallb\fi}
\mydefallb aAbBcCdDeEfFgGhHiIjJkKlLmMnNoOpPqQrRsStTuUvVwWxXyYzZ\mydefallb

\def\mydefb#1{\expandafter\def\csname #1tt\endcsname{\texttt{#1}\xspace}}
\def\mydefallb#1{\ifx#1\mydefallb\else\mydefb#1\expandafter\mydefallb\fi}
\mydefallb aAbBcCdDeEfFgGhHiIjJkKlLmMnNoOpPqQrRsStTuUvVwWxXyYzZ\mydefallb

\def\mydefb#1{\expandafter\def\csname #1cal\endcsname{\mathcal{#1}}}
\def\mydefallb#1{\ifx#1\mydefallb\else\mydefb#1\expandafter\mydefallb\fi}
\mydefallb aAbBcCdDeEfFgGhHiIjJkKlLmMnNoOpPqQrRsStTuUvVwWxXyYzZ\mydefallb

\def\mydefgreek#1{\expandafter\def\csname bf#1\endcsname{\text{$\boldsymbol{\csname #1\endcsname}$}}}
\def\mydefallgreek#1{\ifx\mydefallgreek#1\else\mydefgreek{#1}%
   \lowercase{\mydefgreek{#1}}\expandafter\mydefallgreek\fi}
\mydefallgreek {alpha}{Alpha}{beta}{Beta}{gamma}{Gamma}{delta}{Delta}{epsilon}{Epsilon}{zeta}{Zeta}{eta}{Eta}{theta}{Theta}{iota}{Iota}{kappa}{Kappa}{lambda}{Lambda}{mu}{Mu}{nu}{Nu}{omicron}{Omicron}{pi}{Pi}{rho}{Rho}{sigma}{Sigma}{tau}{Tau}{upsilon}{Upsilon}{phi}{Phi}{xi}{Xi}{chi}{Chi}{psi}{Psi}{omega}{varepsilon}{varphi}{Omega}\mydefallgreek

\def\mydefb#1{\expandafter\def\csname T#1\endcsname{\boldsymbol{\mathcal{\MakeUppercase{#1}}}}}
\def\mydefallb#1{\ifx#1\mydefallb\else\mydefb#1\expandafter\mydefallb\fi}
\mydefallb aAbBcCdDeEfFgGhHiIjJkKlLmMnNoOpPqQrRsStTuUvVwWxXyYzZ\mydefallb

\usepackage{xspace}

\newcommand{\nx}{{n_{x}}}
\newcommand{\ny}{{n_{y}}}

\newcommand{\pjoint}{\pi_{\rm joint}}
\newcommand{\pcond}{\pi_{\rm cond}}
\newcommand{\pmarg}{\pi_{\rm marg}}

\NewDocumentCommand{\warmstart}{m g}{%
  \IfNoValueTF{#2}
    {%
        \beta(#1)
    }
    {%
        \beta(#1 \Vert #2)
    }%
}

\newcommand{\ESS}{\mathrm{ESS}}
\newcommand{\MALA}{\textnormal{\texttt{MALA}}\xspace}

\newcommand{\MarCo}{\textnormal{\texttt{MarCo}}\xspace}
\newcommand{\Joint}{\textnormal{\texttt{JointMH}}\xspace}
\newcommand{\Oneblock}{\textnormal{\texttt{OneBlock}}\xspace}
\newcommand{\jointMH}{\textnormal{\texttt{joint MH}}\xspace}

\newcommand{\OneblockMCMC}{\textnormal{\texttt{OneBlockMH}}\xspace}

\usepackage[dvipsnames]{xcolor}
\usepackage[textsize=scriptsize]{todonotes}
\newcommand{\liznote}[1]{\todo[inline,linecolor=cyan,backgroundcolor=cyan!25,bordercolor=cyan]{Liz: {#1}}}

\definecolor{matplotlib0}{HTML}{1f77b4}
\definecolor{matplotlib1}{HTML}{ff7f0e}
\definecolor{matplotlib2}{HTML}{2ca02c}
\definecolor{matplotlib3}{HTML}{d62728}
\definecolor{matplotlib4}{HTML}{9467bd}
\definecolor{matplotlib5}{HTML}{8c564b}
\definecolor{matplotlib6}{HTML}{e377c2}
\definecolor{matplotlib7}{HTML}{7f7f7f}
\definecolor{matplotlib8}{HTML}{bcbd22}
\definecolor{matplotlib9}{HTML}{17becf}

\pgfplotstableset{
        create on use/samplername/.style={
        create col/set list={\Joint, \Oneblock, \MarCo}},
        columns/samplername/.style={column type=c, string type, column name={}}
        }

\pgfplotstableset{
    metricstablestyle/.style={
    	every head row/.style={before row=\toprule, after row=\midrule},
	every last row/.style={after row=\bottomrule},
        every row 0 column 0/.style={
            postproc cell content/.append style={
                /pgfplots/table/@cell content/.add={\cellcolor{matplotlib0!25}}{},
            }
        },
        every row 1 column 0/.style={
            postproc cell content/.append style={
                /pgfplots/table/@cell content/.add={\cellcolor{matplotlib2!25}}{},
            }
        },
        every row 2 column 0/.style={
            postproc cell content/.append style={
                /pgfplots/table/@cell content/.add={\cellcolor{matplotlib1!25}}{},
            }
        },
}
}

\begin{document}

\maketitle

\begin{abstract}
The problem of sampling from a probability distribution arises in many applications such as posterior sampling in hierarchical Bayesian inverse problems and Gaussian processes for machine learning.
Markov chain Monte Carlo (MCMC) algorithms are often used for sampling from a target probability distribution, but implementations can be computationally expensive, especially for large-scale problems. %
In certain applications, the target distribution naturally factorizes into a lower dimensional marginal distribution and a conditional distribution that allows exact sampling.
We describe an MCMC algorithm called \MarCo that exploits such a structure and generates a Markov chain via Metropolis-Hastings sampling from the marginal distribution, followed by sampling from the exact conditional distribution.
By design, \MarCo constructs a Markov chain on the joint space that inherits the convergence behavior of the marginal MCMC algorithm.
This provides multiple theoretical and computational advantages.
We prove that \MarCo can achieve improved mixing time upper bounds compared to direct sampling from the joint distribution. %
Moreover, compared to one-block methods that also exploit marginal-conditional structure, we use the framework of Peskun-Tierney ordering to show that \MarCo has a larger right spectral gap and smaller asymptotic variance, thus leading to %
superior convergence properties.
Numerical results illustrate the performance benefits of \MarCo and are provided for various problems, including a semi-blind image deblurring example.
\end{abstract}

\begin{keywords}
MALA, marginals, Metropolis--Hastings, mixing time, spectral gap, conditioning
\end{keywords}

\begin{MSCcodes}
60J22, 65C05, 60B10
\end{MSCcodes}

\tableofcontents

\section{Introduction}

Sampling from a distribution is a foundational problem in computational statistics \cite{andrieu_introduction_2003,ghanem_bayesian_2017}.
Many distributions, for instance those arising from a Bayesian inverse problem or a hierarchical model, do not have analytical expressions for their densities that would allow efficient, exact sampling.
However, they usually have access to evaluation up to a normalizing constant.
In this setting, one turns to Markov chain Monte Carlo (MCMC) methods~\cite{hastings_monte_1970,metropolis_equation_1953,robert_monte_2004}.
These methods construct a Markov chain whose stationary distribution is the target distribution and produce approximate samples once the chain has mixed; that is, has reached stationarity.

A central difficulty in MCMC is that mixing can be prohibitively slow.
Therefore, theoretically assessing the number of steps required to reach stationarity, the \emph{mixing time}, has long been a point of interest in the sampling community \cite{brown_convergence_2024}.
In recent years, there has been much progress in quantifying nonasymptotic upper bounds to mixing times.
These upper bounds are often related to polynomial factors of the dimension of the distribution and its curvature \cite{altschuler_faster_2024,wu_minimax_2022,chewi_analysis_2025,altschuler_resolving_2022,dwivedi_log-concave_2019}.
In addition, the community has developed the classical analysis of the asymptotic properties of the underlying Markov processes, such as the spectral gap and asymptotic variance~\cite{douc_markov_2018,andrieu_explicit_2024}, or comparison results such as the Peskun-Tierney ordering~\cite{peskun_optimum_1973,tierney_markov_1994}.
These tools allow us to understand what procedures may improve mixing and enable the analysis and benchmarking of different algorithms.

With an eye toward algorithmic design, we question if there exist classes of target distributions whose structure can be naturally exploited in light of the algorithms' mixing time bounds.
To that end, let us consider probability distributions in which the input variables split into two blocks, i.e., $\pjoint = \pjoint(\d \bfx, \d \bfy)$, where $(\bfx, \bfy) \in \bbR^{\nx} \times \bbR^{\ny}$.
Direct application of MCMC without exploiting this block structure can result in joint samplers that exhibit very slow mixing times.

The community has developed various methods that do exploit block structure.
The most common approach is Gibbs sampling, which alternates exact draws from conditional distributions \cite{geman_stochastic_1984,gelfand_sampling-based_1990,gelman_bayesian_2013}.
Gibbs-type schemes are the workhorses of hierarchical Bayesian computation, latent variable models, and graphical models because updating one block at a time can be far cheaper than sampling from the joint directly.
However, the computational cost of exact sampling from the conditionals can become prohibitive for large-scale problems.
Moreover, their mixing still depends on the coupling between blocks, and slow mixing can persist.

Various approaches have been considered to address these drawbacks.
For example, marginalization-based sampling approaches have been developed to exploit the marginal-conditional structure that comes from factorizing a joint density $\pjoint$ over $\bbR^\nx \times \bbR^\ny$ into
	\begin{align}\label{eq:joint_factorization}
		\pjoint(\d \bfx, \d \bfy) = \pcond(\d \bfx \mid \bfy) \pmarg(\d \bfy)
	\end{align}
where $\pcond(\d \bfx \mid \bfy)$ is the conditional density over $\Rbb^\nx$ and the marginal density over $\Rbb^\ny$ is
	\begin{align}
    \label{eq:marginal_y}
		\pmarg(\d \bfy) = \int_{\Rbb^\nx} \pjoint(\bfx, \d\bfy)\, \d\bfx.
	\end{align}
An approach based on such a factorization is the one-block algorithm~\cite{rue_gaussian_2005}. 
This constructs a Markov chain by proposing $\bfy^\star$ from a proposal distribution on $\bbR^\ny$, generating $\bfx^\star$ by exact sampling from $\pcond(\d\bfx\mid\bfy^\star)$, and implementing the Metropolis-Hastings acceptance/rejection step on the proposal  $(\bfx^\star,\bfy^\star)$.
In \cite{van_dyk_marginal_2010,papaspiliopoulos_retrospective_2008,liu_collapsed_1994,rue_gaussian_2005}, a \emph{collapsed} or \emph{marginal} algorithm was used, where $\bfx$ was integrated out entirely and MCMC methods are applied directly to the marginal (i.e., in the reduced space $\bbR^{\ny}$).
A numerical study of a marginal-then-conditional sampler was considered in \cite{fox_fast_2016} for image deblurring, where an MCMC method was applied to the marginal directly and, after obtaining effectively independent samples, draws from the conditional are made.
In \cite{saibaba_efficient_2019}, marginalization-based techniques that combine low-rank approximations with delayed acceptance and pseudomarginalization techniques were considered.
Moreover, leveraging this perspective and recent theory for MCMC \emph{mixing-time} guarantees, we conduct a rigorous analysis that quantifies the advantage of the \MarCo algorithm.
In particular,
\begin{enumerate}
\item We show that \MarCo inherits the convergence properties of the marginal sampling and generates a Markov chain that converges to the target joint distribution in total variation (\Cref{sec:marco_convergence}).
\item Since \MarCo inherits the mixing behavior of the marginal, we relate the curvature of the marginal and the joint to show that \MarCo converges to the joint distribution faster than existing approaches (\Cref{sec:theory_marco_vs_joint}).
\item Using a Peskun-Tierney ordering argument, we show that \MarCo exhibits a lower asymptotic variance and a larger right spectral gap, and thereby attains faster asymptotic convergence, than one-block methods (\Cref{sec:theory_marco_vs_one_block}).
\end{enumerate}

We remark that there are many examples of distributions which satisfy the above marginal-conditional structure and assumptions.
A rich and widely applicable example of this setting is when $\pcond(\d \bfx \mid \bfy)$ is Gaussian.
This often arises in hierarchical Bayesian inverse problems~\cite{saibaba_efficient_2019} and linear mixed-effects models~\cite{domke_hamiltonian_2024}.
Although these provide a nice testbed of examples, our approach and the corresponding theory apply to more general distributions.

\paragraph{Outline} In \Cref{sec:background}, we provide an overview of the Metropolis-Hastings (MH) algorithm, including tools for describing convergence behavior of Markov chains.
Then in \Cref{sec:methodology}, we describe \MarCo, providing some theoretical justifications and algorithmic comparisons to existing methods for sampling in the joint space.
The main theoretical results of the paper are provided in \Cref{sec:theory_marco_vs_joint,sec:theory_marco_vs_one_block}, where we provide both nonasymptotic and asymptotic convergence results for \MarCo.
Numerical results illustrating the theory are provided in \Cref{sec:numerics}, and conclusions and future directions are discussed in \Cref{sec:conclusions}.

\section{Background}
\label{sec:background}
Let $\mathcal{P}(\Rbb^d)$ denote the set of probability measures on $\Rbb^d$. 
Our goal is to draw samples from a target probability distribution, $\pi(\d \bfz) \in \Pcal(\Rbb^d)$. Throughout this paper, we consider target probability distributions that are absolutely continuous with respect to the Lebesgue measure on $\Rbb^d$. 
We denote sampling from a distribution by $\bfz \sim \pi$, $\bfz\sim \pi(\cdot)$, or  $\bfz \sim \pi(\d \bfz)$, and denote the corresponding probability density function as $\pi(\bfz)$; i.e., $\pi(\d\bfz)=\pi(\bfz)\d\bfz$. All probability densities are defined with respect to the Lebesgue measure, $\d\bfz$. %

In practice, sampling directly from a target distribution and evaluating the normalizing constant of a probability density function can be difficult. 
These challenges have motivated the development of MCMC methods to draw approximate samples from complex distributions using evaluations of unnormalized density functions \cite{meyn_markov_1993, robert_monte_2004, craiu_handbook_2026}.
A (discrete-time) Markov chain is a sequence of random variables $(\bfZ^0, \bfZ^1, \dots)$ that defines a stochastic process satisfying the Markov property. 
The evolution of a Markov chain is governed by a transition kernel, $k: \Rbb^d \times \Bcal(\Rbb^d) \to [0,1]$, where $\Bcal(\Rbb^d)$ is the Borel $\sigma$-algebra over $\Rbb^d$. 
Specifically, given a state realization at iteration $t$, $\bfZ^t = \bfz^t$, the distribution of the next random variable in the chain, $\bfZ^{t+1}$, is given by $k(\bfz^t, \cdot)$. 
As a result, an initial distribution, $\pi^0$, paired with the transition kernel, $k$, defines a sequence of probability distributions, $\pi^t$, where $\pi^t$ is the distribution of $\bfZ^t$. Formally, $\pi^t(\d\bfz)=\int k^{t}(\hat\bfz,\d\bfz)\pi^0(\d\hat\bfz)$, where $k^{t}$ is the $t$-th iterated transition kernel.
We say a sample path of length $T$ is a realization of a Markov chain $\{\bfz^t\}_{t=0}^T$, where $\bfz^0\sim\pi^0$ and $\bfz^{t+1}\sim k(\bfz^t,\cdot)$ for each $t=0,\dots,T-1$.

\subsection{Convergence of Markov Chains}
\label{sec:convergence_markov_chains}
To formalize the concept of convergence of Markov chains, we first define a notion of distance between probability measures.
To this end, given two probability measures $\pi, \pi' \in \Pcal(\Rbb^d)$, we define the \emph{total variation} (TV) distance as
\begin{align}\label{eq:tv_dist}
        \|\pi' - \pi\|_{\rm TV} \coloneqq \sup_{A\in \Bcal(\Rbb^d)} \left|\int_A \pi'(\d\bfz) - \int_A \pi(\d\bfz)\right|.
\end{align}
We say that the Markov chain $\{\bfZ^t\}_{t\ge 0}$ converges to $\pi$ if $\lim_{t\to\infty}\|\pi^t-\pi\|_{\rm TV}=0$.
We denote convergence in total variation by $\pi^t \xrightarrow{\rm TV} \pi$.
We refer to $\pi$ as the invariant or stationary measure of the Markov transition kernel, $k$, if $\int_{\Rbb^d} k(\bfz,\cdot)\pi(\d\bfz)=\pi(\cdot)$.

We now review mixing times and asymptotic variance to estimate the speed of convergence of Markov chains.
We then describe how spectral properties of the Markov transition kernel connect to both mixing times and asymptotic variance.

\paragraph{Total Variation Mixing Time}
The mixing time is the minimum number of iterations required to satisfy a prescribed TV threshold $\varepsilon > 0$; that is,
    \begin{align}\label{eq:mixing_time}
        t_{\rm mix}(\varepsilon; \pi^0) \coloneqq \min\left\{t \mid \|\pi^t - \pi\|_{\rm TV}\le \varepsilon\right\}
\end{align}
where $\pi$ is the stationary distribution.
Note, we explicitly encode the initial distribution $\pi^0$ in the mixing time notation. 
In practice, quantifying exact mixing times is difficult, thus one estimates them via a computable upper bound, $ t_{\rm mix}^\uparrow(\varepsilon; \pi^0)$.

Intuitively, the closer $\pi^0$ is to the target distribution, $\pi$, the faster an MCMC method will likely mix. To quantify this, we also introduce the notion of a warm start. If $\pi^0$ has density $\pi^0(\bfz)$, we say that it is a warm start for $\pi$ if, for some $\beta \ge 1$,
\begin{align}
\label{eq:characterization_beta_warm}
    \pi^0(\bfz)\le \beta \pi(\bfz),\qquad\text{for almost every }\bfz\in\mathbb{R}^d.
\end{align}
We define $\warmstart{\pi^0}{\pi}$ as the infimum over all $\beta$ for which~\eqref{eq:characterization_beta_warm} holds.
When it is clear in context, for the sake of notation, we will suppress the dependency on $\pi$, i.e., $\warmstart{\pi^0}{\pi} = \warmstart{\pi^0}$.
If $\beta \approx 1$, the initial distribution is close to the target, and thus one expects fast mixing. 
Mixing time upper bounds typically depend on the initial distribution $\pi^0$ only via the warm start constant.

\paragraph{Asymptotic Variance}
A major application of Monte Carlo sampling is to approximate integrals of the form $\int_{\Rbb^d}f(\bfz)\pi(\d\bfz)$ via an
empirical average, $\frac{1}{T}\sum_{t=1}^Tf(\bfZ^t)$, where $\{\bfZ^t\}_{t \ge 0}$ is a Markov chain.
The quality of this estimator can be quantified by the \textit{asymptotic variance}
\begin{align}\label{eq:asymptotic_variance}
    \operatorname{var}(f,k) \coloneqq \lim_{T\to\infty}T\cdot \operatorname{var}\left(\textstyle \frac{1}{T}\sum_{t=1}^Tf(\bfZ^t)\right),
\end{align}
which is well-defined when $f$ is $\pi$-square-integrable.
The asymptotic variance appears in the Central Limit Theorem for Markov chains as
\begin{align*}
    \sqrt{T}\left(\textstyle \frac{1}{T}\sum_{t=1}^Tf(\bfZ^t)-\int_{\Rbb^d}f(\bfz)\pi(d\bfz)\right)\xrightarrow[]{\mathcal{D}}\mathcal{N}(0,\operatorname{var}(f,k)).
\end{align*}
where $\xrightarrow[]{\mathcal{D}}$ means convergence in distribution.
A lower asymptotic variance implies a better quality  Monte Carlo estimator for a fixed number of iterations, $T$.

\paragraph{Spectral Gap}

The spectral properties of the Markov transition kernel connect to both mixing time and asymptotic variance.
Viewing $k$ as an operator on the Hilbert space of $\pi$-square-integrable functions, let $\Scal(k)$ be the spectrum of the kernel.
It can be shown that $\Scal(k) \subset [-1,1]$ if $k$ is $\pi$-reversible\footnote{A Markov transition kernel $k(\bfz,\d\hat\bfz)$ is $\pi$-reversible if it satisfies the detailed balance condition $\pi(\d\bfz)k(\bfz,\d\hat\bfz)=\pi(\d\hat\bfz)k(\hat \bfz,\d\bfz)$ on $\mathbb{R}^d\times \mathbb{R}^d$.}, i.e., self-adjoint in the $\pi$-weighted Hilbert space.
Two quantities of interest are the \emph{right spectral gap}, $\operatorname{Gap}_R(k)$, and the \emph{absolute spectral gap}, $\operatorname{Gap}(k)$, defined, respectively, as
    \begin{align}\label{eq:spectral_gap}
        \operatorname{Gap}_R(k)
        \coloneqq 1 - \sup \{\lambda : \lambda \in \Scal_0(k)\}\quad \text{and} \quad
        \operatorname{Gap}(k) \coloneqq 1 - \sup \{|\lambda| : \lambda \in \Scal_0(k)\},
    \end{align}
where $\Scal_0(k)$ is the spectrum of the kernel restricted to the subspace of mean-zero functions.

Under $\pi$-irreducibility of the chain\footnote{Informally, a chain is $\pi$-irreducible if it is able to explore the entire state space.},  the largest eigenvalue of $k$ is always $1$ with multiplicity $1$.
The second largest eigenvalue quantifies the ``bottlenecks'' in the target distribution, $\pi$, i.e., regions of low probability that separate regions of high probability and are difficult for the chain to cross \cite{lawler_bounds_1988,qin_convergence_2024, milinanni_large_2025}.
A larger right spectral gap indicates that the chain crosses these bottlenecks more easily. This improved mobility across the state space ensures that the chain rapidly forgets the dependence on its initial state, thus reducing mixing time, and decreases the autocorrelation between successive samples, lowering asymptotic variance.

These intuitive connections between spectral gap, mixing times, and asymptotic variance can be made more rigorous. In particular, the reduction in the chain's autocorrelation bounds the asymptotic variance via the right spectral gap as
\begin{align*}
    \operatorname{var}(f,k) \le  \tfrac{2-\operatorname{Gap}_R(k)}{\operatorname{Gap}_R(k)}\operatorname{var}_\pi(f),
\end{align*}
where $\operatorname{var}_\pi(f)\coloneqq \int_{\Rbb^d}f(\bfz)^2\pi(\d\bfz)-\left(\int_{\Rbb^d}f(\bfz)\pi(\d\bfz)\right)^2$ is the variance of $f$ under $\pi$ \cite{douc_markov_2018, andrieu_explicit_2024}.
Moreover, the absolute spectral gap provides a bound to the mixing times.
Following the discussion in~\cite{andrieu_explicit_2024}, if the initial distribution $\pi^0$ is absolutely continuous with respect to $\pi$
and $\chi^2(\pi^0,\pi)$, the $\chi^2$ divergence between $\pi^0$ and $\pi$, is finite, then $t_{\rm mix}(\varepsilon; \pi^0) \leq t_{\rm mix}^\uparrow(\varepsilon; \pi^0)$ where
\begin{align*}
    t_{\rm mix}^\uparrow(\varepsilon;\pi^0) \coloneqq \left\lceil \tfrac{\log\left(2\varepsilon/\sqrt{\chi^2(\pi^0,\pi)}\right)}{\log(1-\text{Gap}(k))} \right\rceil.
\end{align*}
\subsection{Metropolis-Hastings Algorithm}
\label{sec:metropolis_hastings_mixing_times}
The convergence of a Markov chain can be guaranteed if  certain properties of the transition kernel and MCMC algorithm hold.
Metropolis-Hastings (MH) Markov chains \cite{metropolis_equation_1953, hastings_monte_1970} translate these properties into mild conditions on the target density $\pi$ and the proposal distribution $q(\d\bfz^\star\mid\bfz^t)$~\cite[Corollary~7.5 and 7.7]{robert_monte_2004}, which we assume to hold throughout this paper.
Procedurally, given state $\bfz^t\in \Rbb^d$, an MH algorithm proposes a next state, $\bfz^\star$, by sampling from a proposal distribution, $q(\cdot \mid \bfz^t)$. 
The proposal is then accepted with probability
    \begin{align}\label{eq:acceptance_probability}
        \alpha(\bfz^t, {\bfz}^\star) = \min \left\{1, \frac{\tilde{\pi}({\bfz}^\star)q(\bfz^t \mid {\bfz}^\star)}{\tilde{\pi}(\bfz^t)q({\bfz}^\star \mid \bfz^t)  }\right\}
    \end{align}
where $\tilde{\pi}$ is the (unnormalized) target probability density function. 
Upon acceptance, the sample path proceeds by setting $\bfz^{t+1} \gets {\bfz}^\star$.
Otherwise, the proposal is rejected and the path remains at $\bfz^{t+1} \gets \bfz^t$. 
The accept/reject step is the hallmark of MH algorithms. 
The resulting MH Markov chain is characterized by the transition kernel
\begin{align}
    \label{eq:MH_transition_kernel}
    k(\bfz,\d\hat\bfz)=q(\hat\bfz\mid\bfz)\alpha(\bfz,\hat\bfz)+r(\bfz)\delta_{\bfz}(\d\hat\bfz),
\end{align}
where $\alpha$ is the acceptance probability as in~\eqref{eq:acceptance_probability}, and $r$ is the probability of rejecting the proposal and is given by
\begin{align}
\label{eq:rejection_probability}
    r(\bfz)=1-\int_{\bbR^{d}} q(\tilde\bfz\mid\bfz)\alpha(\bfz\mid\tilde\bfz)\d\tilde\bfz.
\end{align}

The proposal mechanism, $q(\cdot \mid \bfz^t)$, gives rise to a wide range of MH algorithms.
For example, the Metropolis-adjusted Langevin algorithm (\MALA) informs proposals using the gradient of the log density function. 
Specifically,  $q_{\MALA}(\cdot \mid \bfz^t) = \Ncal(\bfz^t + h \nabla \log \tilde{\pi}(\bfz^t), 2 h \bfI_d)$ where $\Ncal$ denotes a Gaussian distribution and $h > 0$ is a step size. 
Thus, a \MALA proposal, $\bfz^\star \sim q_{\MALA}(\d\bfz^\star \mid \bfz^t)$, is constructed by
    \begin{align}\label{eq:mala_proposal}
        \bfz^\star = \bfz^t + h \nabla \log \tilde{\pi}(\bfz^t) + \sqrt{2h}\bfzeta^t, \qquad \text{where $\bfzeta^t\sim \Ncal({\bf0}_d, \bfI_d)$.}
    \end{align}
\section{\MarCo: Marginal MCMC with Exact Conditional Draws}
\label{sec:methodology}
We propose a new MCMC algorithm for sampling from $\pjoint(\bfx, \bfy)$ 
that leverages the conditional-marginal factorization \cref{eq:joint_factorization} to improve sampling efficiency. 
Our algorithm combines MH sampling from the \textcolor{blue}{\underline{\texttt{Mar}}}ginal distribution, ${\pi}_{\rm marg}(\d\bfy)$, with sampling from the corresponding exact \textcolor{blue}{\underline{\texttt{Co}}}nditional distribution, $\pcond(\d\bfx \mid \bfy)$; hence, we call our method \textcolor{blue}{\MarCo}.
The \MarCo algorithm is thus designed for sampling problems where
one can \emph{evaluate} the unnormalized marginal density $\tilde\pi_{\rm marg}(\bfy)$ and generate samples from the \emph{exact} conditional distribution; e.g., densities with conditional Gaussian distributions. 
We formalize the \MarCo algorithm and describe the Markov transition kernel of the underlying stochastic process in \Cref{sec:marco_algorithm}. Then, we show that the convergence behavior of \MarCo in the joint space coincides with the MCMC behavior in the marginal space; see \Cref{sec:marco_convergence}. A comparison with existing algorithms with a joint accept/reject strategy is provided in \Cref{sec:marco_comparisons}.

\subsection{The \MarCo Algorithm and Transition Kernel}
\label{sec:marco_algorithm}

The \MarCo algorithm constructs a Markov chain in the joint space $\{(\bfX^t, \bfY^t)\}_{t \ge 0}$ %
in a two-phased approach: first, the $\bfy$-chain is generated targeting the marginal distribution, followed by exact conditional sampling of the $\bfx$-chain, conditioned on the $\bfy$-chain values.

For the $\bfy$-chain, \MarCo uses an MH algorithm that targets the marginal distribution, $\pmarg(\d\bfy)$. In particular, it uses a proposal distribution $q_{\rm marg}(\d\bfy^\star \mid \bfy)$ on $\bbR^\ny$, and performs the MH acceptance/rejection step with the acceptance probability
	\begin{align}\label{eq:marco_acceptance_probability}
	        \alpha_{\rm marg}(\bfy^t, \bfy^\star) = \min \left\{1, 
        \frac{
        \tilde{\pi}_{\rm marg}(\bfy^\star) q_{\rm marg}(\bfy^t \mid \bfy^\star)
        }{
        \tilde{\pi}_{\rm marg}(\bfy^t) q_{\rm marg}(\bfy^\star \mid \bfy^t)
        }
        \right\}.
	\end{align} 
Note that the (unnormalized) marginal density, proposal mechanism, and acceptance probability require only the $\bfy$-coordinate of the joint \MarCo chain. 
During this marginal MH sampling,  \MarCo obtains a sample path, $\{\bfy^t\}_{t=0}^T$. 
To obtain a corresponding realization of the $\bfx$-chain, $\{\bfx^t\}_{t=0}^T$, \MarCo  draws exact samples from the conditional distribution, i.e., $\bfx^t \sim \pcond(\d\bfx \mid \bfy^t)$ for each $t=0,\dots, T$. 
We summarize the \MarCo procedure in \Cref{alg:MarCo}.

\begin{algorithm}[t]
  \caption{\MarCo: Marginal MCMC + Conditional Draws}
  \label{alg:MarCo}
  \begin{algorithmic}[1]
    \Statex {\bf Inputs:} %
        \Statex \begin{tabular}{clcl}
		$\bullet$ & $\pmarg(\bfy)$ & : & evaluable (unnormalized) marginal density \\
		$\bullet$ & $\cdot \sim \pcond(\d\bfx \mid \bfy)$ & : &  algorithm to sample conditional density exactly\\
		$\bullet$ & $q_{\rm marg}(\d\bfy^\star \mid \bfy)$ & : & proposal density in $\bfy$-space\\
		$\bullet$ & $\bfy^0 \in \Rbb^{\ny}$ & : & initial point\\
		$\bullet$ & $T \in \Nbb_{\ge 0}$ & : & maximum number of MCMC steps 
		\end{tabular}
		
    \Statex {\bf Output:} Sample path $\{(\bfx^t,\bfy^t)\}_{t=0}^T$ targeting $\pjoint(\d\bfx,\d\bfy)$
 	\Statex \vspace{-9pt}%
		\Statex \colorbox{lightgray!25}{\makebox[0.985\linewidth][l]{MCMC for $\pmarg(\d\bfy)$}} 
		\vspace{-20pt} 
		\Statex %
    \For{$t=0,\dots,T-1$}
      \State Propose $\bfy^\star \sim q_{\rm marg}(\d \bfy^\star\mid \bfy^t)$
      \State Compute acceptance probability $\alpha_{\rm marg}(\bfy^t, \bfy^\star)$ \Comment{\Cref{eq:marco_acceptance_probability}}
      \State Draw $u^t\sim \operatorname{Unif}(0,1)$ %
      \If{$u^t\leq \alpha_{\rm marg}(\bfy^t,\bfy^\star)$}
        \State Set $\bfy^{t+1}\gets \bfy^\star$ \Comment{accept}
      \Else
        \State Set $\bfy^{t+1}\gets \bfy^t$ \Comment{reject}
      \EndIf
    \EndFor
     		\Statex \vspace{-9pt}  %
		\Statex \colorbox{lightgray!25}{\makebox[0.985\linewidth][l]{Draws from $\pcond(\d\bfx \mid \bfy)$}} 
		\vspace{-20pt} 
		\Statex%
    \For{$t=0,\dots,T$} \Comment{parallelizable}
      \State Draw $\bfx^t \sim \pcond(\d \bfx\mid\bfy^t)$
    \EndFor
  \end{algorithmic}
\end{algorithm}

Despite the two-phased approach, \MarCo does indeed form a Markov chain on the joint space. %
Its transition kernel reflects the conditional-marginal structure over which the algorithm proceeds. 
Specifically, the \MarCo transition kernel is
    \begin{align}\label{eq:marco_transition_kernel}
        k_{\tt M}((\bfx, \bfy), (\d\hat{\bfx}, \d\hat{\bfy})) 
        =\pcond(\d\hat{\bfx} \mid \hat{\bfy}) k_{\rm marg}(\bfy, \d\hat{\bfy}),
    \end{align}
where the marginal kernel, $k_{\rm marg}$, is a standard MH transition kernel on the $\bfy$-space given by
\begin{align}\label{eq:marco_transition_kernel_marginal}
    k_{\rm marg}(\bfy, \d\hat{\bfy}) = q_{\rm marg}(\hat{\bfy} \mid \bfy) \alpha_{\rm marg}(\bfy, \hat{\bfy})\, \d\hat{\bfy}+ r_{\rm marg}(\bfy) \delta_{\bfy}(\d \hat{\bfy})
    \end{align}
where $\delta_{\bfy}(\d\hat{\bfy})$ denotes the Dirac measure centered at $\bfy$ and 
\begin{align}\label{eq:marco_transition_kernel_rejection}
        r_{\rm marg}(\bfy) = 1 - \int_{\Rbb^\ny} q_{\rm marg}(\tilde{\bfy} \mid \bfy) \alpha_{\rm marg}(\bfy, \tilde{\bfy}) \, \d\tilde{\bfy}
    \end{align}
is the probability of rejecting any proposal sampled from $\bfy$.
The hallmark of the \MarCo transition kernel, $k_{\tt M}$, is the multiplicative term $\pcond(\d \hat{\bfx} \mid \hat{\bfy})$. %
This guarantees that \MarCo draws a new sample in the $\bfx$-chain from the exact conditional, regardless of acceptance \emph{or} rejection of ${\bfy}^\star$.
\subsection{Analyzing \MarCo Convergence: The Marginal Is All You Need}
\label{sec:marco_convergence}

By design, the quality of the \MarCo-generated samples in the joint space depends on the performance of the MH algorithm targeting the marginal distribution.
In this section, we prove that the convergence behavior of the \MarCo Markov chain in the joint space is inherited from the convergence behavior of the MH algorithm in the marginal space (see \Cref{thm:MarCo_inherits_marginal}).
We begin with two intermediate observations connecting the marginal and \MarCo Markov chains.
First, we prove that the conditional-marginal factorization of the joint distribution is preserved via the \MarCo transition kernel structure.

\begin{lemma}[\MarCo Preserves Conditional-Marginal Factorization]
\label{lem:marco_preserves_factorization}
Let the initial distribution factorize as $\pi_{\tt M}^0(\d\bfx,\d\bfy) =\pcond(d\bfx\mid\bfy) \mu^0(\bfy)$.
Then, for every $t\ge0$, the law of the $t$-th iterate of \MarCo satisfies
\begin{align}
\label{eq:marco_law_factorization}
    \pi^t_{\tt M}(\d\bfx, \d\bfy)=\pcond(\d\bfx\mid\bfy) \mu^t(\d\bfy).
\end{align}
\end{lemma}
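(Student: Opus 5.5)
We argue by induction on $t$, and we also identify the marginal factor explicitly: we claim that $\mu^{t+1}(\d\hat\bfy) = \int_{\Rbb^\ny} k_{\rm marg}(\bfy,\d\hat\bfy)\,\mu^t(\d\bfy)$, so that $\mu^t$ is precisely the law of the $t$-th iterate of the marginal MH chain started from $\mu^0$. The base case $t=0$ is the hypothesis on $\pi^0_{\tt M}$.

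For the inductive step, assume \eqref{eq:marco_law_factorization} holds at step $t$. We then compute $\pi^{t+1}_{\tt M}$ by integrating the \MarCo kernel \eqref{eq:marco_transition_kernel} against $\pi^t_{\tt M}$:
\begin{align*}
\pi^{t+1}_{\tt M}(\d\hat\bfx,\d\hat\bfy)
&= \int_{\Rbb^\nx\times\Rbb^\ny} \pcond(\d\hat\bfx\mid\hat\bfy)\, k_{\rm marg}(\bfy,\d\hat\bfy)\, \pcond(\d\bfx\mid\bfy)\,\mu^t(\d\bfy).
\end{align*}
The key observation is that the integrand depends on the previous $\bfx$ only through the factor $\pcond(\d\bfx\mid\bfy)$, since $k_{\tt M}$ does not depend on $\bfx$. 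Integrating out $\bfx$ first therefore contributes $\int_{\Rbb^\nx}\pcond(\d\bfx\mid\bfy)=1$. Next, the factor $\pcond(\d\hat\bfx\mid\hat\bfy)$ does not depend on $\bfy$, so we can pull it outside the remaining $\bfy$-integral. This gives
\begin{align*}
\pi^{t+1}_{\tt M}(\d\hat\bfx,\d\hat\bfy) = \pcond(\d\hat\bfx\mid\hat\bfy)\int_{\Rbb^\ny} k_{\rm marg}(\bfy,\d\hat\bfy)\,\mu^t(\d\bfy) = \pcond(\d\hat\bfx\mid\hat\bfy)\,\mu^{t+1}(\d\hat\bfy),
\end{align*}
which closes the induction.

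The only delicate point is justifying these manipulations at the level of measures rather than densities. This matters because $k_{\rm marg}$ contains the singular rejection part $r_{\rm marg}(\bfy)\delta_{\bfy}(\d\hat\bfy)$, so $k_{\rm marg}$ need not have a density. We therefore phrase the computation in terms of test sets $A\times B$ (or bounded measurable test functions $g(\hat\bfx,\hat\bfy)$) and apply Tonelli/Fubini, which is valid because all kernels involved are nonnegative and Markov. Concretely, we compute
\[
\int\!\!\int g(\hat\bfx,\hat\bfy)\,\pi^{t+1}_{\tt M}(\d\hat\bfx,\d\hat\bfy),
\]
integrate in the order $\bfx$, then $\hat\bfx$, then $(\bfy,\hat\bfy)$, and read off that the result equals $\int\!\!\int g(\hat\bfx,\hat\bfy)\,\pcond(\d\hat\bfx\mid\hat\bfy)\,\mu^{t+1}(\d\hat\bfy)$. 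The Dirac component requires no separate treatment: it is simply part of the measure $k_{\rm marg}(\bfy,\cdot)$ in the $\hat\bfy$-integration, and it never interacts with the conditional factor. We also assume measurability of $\bfy\mapsto\pcond(A\mid\bfy)$, i.e., that $\pcond$ is a regular conditional distribution, so that the iterated integrals are well defined.
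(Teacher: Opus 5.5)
Your proof is correct and follows the paper's route: induction on $t$, integrating the \MarCo kernel against $\pi^t_{\tt M}$, using that $k_{\tt M}$ does not depend on the previous $\bfx$ (so $\pcond(\d\bfx\mid\bfy)$ integrates to one), and pulling $\pcond(\d\hat\bfx\mid\hat\bfy)$ out of the $\bfy$-integral to identify $\mu^{t+1}(\d\hat\bfy)=\int k_{\rm marg}(\bfy,\d\hat\bfy)\,\mu^t(\d\bfy)$. Your test-function/Fubini formulation is a more careful version of the paper's formal measure computation, and it explicitly covers the Dirac rejection part of $k_{\rm marg}$, which the paper handles only implicitly.
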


\begin{proof}
We argue by induction on $t$. The claim is true at $t=0$ by assumption. If it holds at time $t$, then by~\eqref{eq:marco_transition_kernel},
\begin{align*}
    \pi^{t+1}_{\tt M}(\d\hat\bfx, \d\hat\bfy)
    &=
    \iint_{\Rbb^\nx \times \Rbb^\ny} k_{\tt M}((\bfx,\bfy),(\d\hat\bfx, \d\hat\bfy)) \, \pi^t_{\tt M}(\d\bfx,\d\bfy) \\
    &=
    \iint_{\Rbb^\nx \times \Rbb^\ny} \pcond(\d\hat\bfx\mid\hat\bfy) k_{\rm marg}(\bfy, \d\hat\bfy)\, \pcond(\d\bfx\mid\bfy) \,\mu^t(\d\bfy)\\
    &=\pcond(\d\hat\bfx\mid\hat\bfy)  \left(\int_{\Rbb^\ny} k_{\rm marg}(\bfy,\d\hat\bfy)\,\mu^t(\d\bfy)\right) \\
    &=\pcond(\d\hat\bfx\mid\hat\bfy) \,\mu^{t+1}(\d\bfy).
\end{align*}
Hence~\eqref{eq:marco_law_factorization} follows for all $t\ge0$.
\end{proof}

The following lemma, adapted from \cite[Proposition 7.2]{polyanskiy_information_2025}, relates the convergence properties of the chain on the marginal space with the \MarCo chain on the joint space.

\begin{lemma}[Total Variation Preservation]
\label{lem:lifted_measure}
The map $\mu(d\bfy)\mapsto \pcond(\d\bfx \mid \bfy)\mu(\d \bfy)$, which lifts marginal distributions on $\Rbb^\ny$ to joint distributions on $\Rbb^{\nx+\ny}$, preserves the TV distance. That is, for any two distributions $\mu_1(d\bfy),\mu_2(d\bfy)$ on $\bbR^\ny$,

	\begin{align}
		\|\mu_1(\d\bfy) - \mu_2(\d\bfy)\|_{\rm TV} = \|\pcond(\d\bfx \mid \bfy)\mu_1(\d \bfy) - \pcond(\d\bfx \mid \bfy)\mu_2(\d \bfy)\|_{\rm TV}.
	\end{align}
\end{lemma}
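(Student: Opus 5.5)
We prove the two inequalities $\le$ and $\ge$ separately, working from the definition \eqref{eq:tv_dist} of TV as a supremum over Borel sets. Throughout, write $\nu_i(\d\bfx,\d\bfy)\coloneqq\pcond(\d\bfx\mid\bfy)\mu_i(\d\bfy)$ for $i=1,2$, and let $\sigma$ be the signed measure $\mu_1-\mu_2$ on $\Rbb^\ny$. We use only two facts: $\pcond(\cdot\mid\bfy)$ is a Markov kernel, meaning a probability measure for each $\bfy$ and measurable in $\bfy$; and for any Borel $A\subset\Rbb^{\nx+\ny}$,
\begin{align*}
\nu_1(A)-\nu_2(A)=\int_{\Rbb^\ny} g_A(\bfy)\,\sigma(\d\bfy),\qquad g_A(\bfy)\coloneqq\int_{\Rbb^\nx}\mathbbm{1}_A(\bfx,\bfy)\,\pcond(\d\bfx\mid\bfy).
\end{align*}
Since $\pcond(\cdot\mid\bfy)$ is a probability measure, the function $g_A$ is measurable and takes values in $[0,1]$.

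\emph{Lower bound ($\ge$).} For a Borel set $B\subset\Rbb^\ny$, take $A=\Rbb^\nx\times B$. Then $g_A=\mathbbm{1}_B$, so $\nu_1(A)-\nu_2(A)=\mu_1(B)-\mu_2(B)$. Taking the supremum over $B$ shows that the TV distance between the lifted measures is at least $\|\mu_1-\mu_2\|_{\rm TV}$. In other words, marginalizing onto the $\bfy$-coordinate cannot increase TV.

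\emph{Upper bound ($\le$).} Take the Hahn decomposition $\Rbb^\ny=P\cup P^c$ for $\sigma$, so that $\sigma^+=\sigma|_P$ and $\sigma^-=-\sigma|_{P^c}$. Because $0\le g_A\le1$,
\begin{align*}
\int g_A\,\d\sigma\le\int_P g_A\,\d\sigma\le\sigma(P)=\mu_1(P)-\mu_2(P)\le\|\mu_1-\mu_2\|_{\rm TV}.
\end{align*}
Symmetrically, $-\int g_A\,\d\sigma\le\sigma^-(P^c)=\mu_2(P^c)-\mu_1(P^c)\le\|\mu_1-\mu_2\|_{\rm TV}$. Taking the supremum over $A$ gives the reverse inequality. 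Equivalently, this step is the data-processing inequality for the Markov kernel $\bfy\mapsto\pcond(\d\bfx\mid\bfy)\delta_\bfy$, which is the route taken in \cite[Proposition 7.2]{polyanskiy_information_2025}.

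\emph{Main obstacle.} The only delicate point is measure-theoretic: $g_A$ must be measurable for an arbitrary Borel $A$ in the product space. We handle this with the standard monotone class ($\pi$–$\lambda$) argument. The class of sets $A$ for which $g_A$ is measurable contains all rectangles, because for $A=C\times B$ we have $g_A(\bfy)=\pcond(C\mid\bfy)\mathbbm{1}_B(\bfy)$. This class is also closed under proper differences and increasing unions. It therefore contains $\Bcal(\Rbb^{\nx+\ny})$, and in the same way $\nu_i$ are well-defined probability measures. An alternative route uses densities: with $p_i$ the density of $\mu_i$, TV is $\tfrac12\int|\pcond(\bfx\mid\bfy)(p_1-p_2)(\bfy)|\,\d\bfx\,\d\bfy$, and the $\bfx$-integral factors out to give $1$. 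That route requires identifying the sup-form TV in \eqref{eq:tv_dist} with half the $L^1$ distance, and it needs densities to exist. The Hahn-decomposition argument avoids both issues, so we take it as the main route.
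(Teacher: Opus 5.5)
Your proof is correct, but it takes a different route from the paper. The paper gives no argument of its own. It cites \cite[Proposition~7.2]{polyanskiy_information_2025}, a general identity for $f$-divergences: composing two input laws with the same Markov kernel leaves the divergence unchanged, $D_f(P_X P_{Y\mid X}\,\|\,Q_X P_{Y\mid X}) = D_f(P_X\,\|\,Q_X)$. It then reads off the TV case.

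You instead prove the TV case from first principles, using the supremum-over-sets definition \eqref{eq:tv_dist}:
\begin{itemize}
\item cylinder sets $\Rbb^\nx\times B$ give $\ge$;
\item the Hahn decomposition of $\mu_1-\mu_2$, together with $0\le g_A\le 1$, gives $\le$;
\item a $\pi$--$\lambda$ argument handles measurability of $g_A$ and shows the lifted measures are well defined.
\end{itemize}

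The citation buys brevity and generality. The same identity holds for KL, $\chi^2$, and every other $f$-divergence; for example, $\chi^2(\pcond\,\mu^0,\pjoint)=\chi^2(\mu^0,\pmarg)$. That generality would matter if one wanted to transfer divergence-based, rather than TV-based, guarantees from the marginal chain to \MarCo.

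Your route buys self-containment and minimal hypotheses. It works directly with the paper's own definition of TV and needs only that $\pcond(\cdot\mid\bfy)$ be a Markov kernel. It uses no densities and does not need to identify TV with half the $L^1$ distance. It also makes explicit that the equality is data processing in both directions: projection onto $\bfy$ gives one inequality and the lifting kernel gives the other. This TV-only statement is exactly what \Cref{thm:MarCo_inherits_marginal} uses.
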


\begin{proof}
The result follows directly from Property 5 of Proposition 7.2 in \cite{polyanskiy_information_2025}.
\end{proof}

We combine the two lemmas to prove the important property
that a \MarCo Markov chain inherits both the convergence properties and the mixing time of the marginal chain.

\begin{theorem}[\MarCo Convergence]
\label{thm:MarCo_inherits_marginal}
Let $\mu^t(\d\bfy)$ denote the distribution of the $t$-th iterate of an MH Markov chain
with initial distribution $\mu^0(\d\bfy)$.
Then, the following hold.
	\begin{enumerate}[leftmargin=*, label=(\alph*)]
	\item If $\mu^t(\d\bfy) \xrightarrow{\rm TV} \pmarg(\d \bfy)$, then $\pi_{\tt M}^t(\d\bfx, \d\bfy) \xrightarrow{\rm TV} \pjoint(\d\bfx, \d\bfy)$ where $\pi_{\tt M}^t$ is the distribution of the $t$-th iterate of the \MarCo chain as given in \Cref{lem:marco_preserves_factorization}.
	\item The mixing times of the marginal and \MarCo chains coincide; that is,
		\begin{align*}
		t_{{\rm mix}}(\varepsilon; \pi_{\tt M}^0(\d\bfx, \d\bfy)) = t_{{\rm mix}}(\varepsilon; \mu^0(\d\bfy)).
		\end{align*}
	\end{enumerate}

\end{theorem}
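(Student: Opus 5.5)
The plan is to reduce every joint-space statement to a marginal-space one by combining \Cref{lem:marco_preserves_factorization} with \Cref{lem:lifted_measure}. Throughout, the \MarCo chain is initialized as in \Cref{lem:marco_preserves_factorization}, that is, $\pi_{\tt M}^0(\d\bfx,\d\bfy)=\pcond(\d\bfx\mid\bfy)\mu^0(\d\bfy)$. Its $\bfy$-marginal is driven by the same kernel $k_{\rm marg}$ that generates $\mu^t$. Then \Cref{lem:marco_preserves_factorization} gives $\pi_{\tt M}^t(\d\bfx,\d\bfy)=\pcond(\d\bfx\mid\bfy)\mu^t(\d\bfy)$ for every $t\ge 0$. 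The target also factorizes, $\pjoint(\d\bfx,\d\bfy)=\pcond(\d\bfx\mid\bfy)\pmarg(\d\bfy)$, by \eqref{eq:joint_factorization}. So both $\pi_{\tt M}^t$ and $\pjoint$ are images of marginal measures under the same lifting map $\mu\mapsto\pcond(\d\bfx\mid\bfy)\mu(\d\bfy)$.

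Applying \Cref{lem:lifted_measure} with $\mu_1=\mu^t$ and $\mu_2=\pmarg$ then gives, for every $t\ge 0$, the key identity
\begin{align*}
\|\pi_{\tt M}^t-\pjoint\|_{\rm TV}=\|\mu^t-\pmarg\|_{\rm TV}.
\end{align*}
For part (a), I take $t\to\infty$ in this identity. The right-hand side tends to zero by hypothesis, so the left-hand side does too, which is exactly $\pi_{\tt M}^t\xrightarrow{\rm TV}\pjoint$.

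For part (b), I substitute the identity into the definition \eqref{eq:mixing_time}. For each $t$, the condition $\|\pi_{\tt M}^t-\pjoint\|_{\rm TV}\le\varepsilon$ holds if and only if $\|\mu^t-\pmarg\|_{\rm TV}\le\varepsilon$. The two sets of admissible $t$ are therefore identical, so their minima agree (both are $+\infty$ if the set is empty). Hence $t_{\rm mix}(\varepsilon;\pi_{\tt M}^0)=t_{\rm mix}(\varepsilon;\mu^0)$.

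There is no real technical obstacle, since the lemmas do all the work. The one point needing care is the hypothesis in the statement. The theorem must be read with $\mu^t$ being the law of the marginal chain under $k_{\rm marg}$, and with $\pi_{\tt M}^0$ having the lifted form of \Cref{lem:marco_preserves_factorization}. This matches the algorithm, which draws $\bfx^0\sim\pcond(\cdot\mid\bfy^0)$. I would state this explicitly at the start of the proof so that \Cref{lem:marco_preserves_factorization} applies at every $t$.
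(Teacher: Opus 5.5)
Your proposal is correct and follows the same route as the paper. You combine \Cref{lem:marco_preserves_factorization} with \eqref{eq:joint_factorization} to write both $\pi_{\tt M}^t$ and $\pjoint$ as lifts, apply \Cref{lem:lifted_measure} to get $\|\pi_{\tt M}^t-\pjoint\|_{\rm TV}=\|\mu^t-\pmarg\|_{\rm TV}$ for every $t$, and read off (a) and (b) from this identity and \eqref{eq:mixing_time}. You also make explicit the hypotheses that the paper's one-line proof leaves implicit: $\mu^t$ is generated by $k_{\rm marg}$, and $\pi_{\tt M}^0$ has the lifted form.
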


\begin{proof}
The proof follows directly from the %
preservation of total variation distance under the lifting introduced in~\Cref{lem:lifted_measure},
and the definition of mixing time in~\eqref{eq:mixing_time}.
\end{proof}
\subsection{Comparisons with Joint Accept/Reject Strategies}
\label{sec:marco_comparisons}

A distinguishing feature of a \MarCo Markov chain is that the accept/reject mechanism is applied only to the marginal chain. 
Traditional MH algorithms that converge to $\pjoint$ apply the accept/reject mechanism in the joint space. 
Such algorithms, denoted by \Joint, have transition kernels of the form
\begin{align}
\label{eq:jointMH_transition_kernel}
    \begin{split}
    k_{\tt J}((\bfx, \bfy), (\hat{\bfx}, \hat{\bfy}))
    &= q_{\rm joint}((\hat{\bfx}, \hat{\bfy}) \mid (\bfx, \bfy))\alpha_{\rm joint}((\bfx, \bfy), (\hat{\bfx}, \hat{\bfy}))\, \d\hat\bfx\, \d\hat\bfy\\
    &\quad + r_{\rm joint}(\bfx, \bfy) \delta_{(\bfx, \bfy)}(\d\hat\bfx, \d\hat{\bfy}),
    \end{split}
\end{align}
which mirrors~\eqref{eq:marco_transition_kernel}, with $\bfz=(\bfx,\bfy)$, $q=q_{\rm joint}$, $\alpha=\alpha_{\rm joint}$ as in~\eqref{eq:acceptance_probability}, and $r=r_{\rm joint}$ as in~\eqref{eq:rejection_probability}.

The \Joint strategy that is most similar to \MarCo is the one-block algorithm. 
Specifically, \OneblockMCMC proposes $(\bfx^\star, \bfy^\star)$ using an MCMC proposal that splits into a proposal on the marginal space, $\bfy^\star \sim q_{\rm marg}(\d\bfy \mid \bfy)$, followed by an exact conditional draw $\bfx^\star \sim \pcond(\d\bfx \mid \bfy^\star)$. The full proposal is hence
\begin{align*}
    q_{\rm joint}((\bfx^\star, \bfy^\star) \mid (\bfx, \bfy)) = \pcond(\d\bfx \mid \bfy^\star)q_{\rm marg}(\d\bfy \mid \bfy).
\end{align*}
Thanks to the use of the exact conditional sampling, the accept/reject functions $\alpha_{\rm joint}$ and $r_{\rm joint}$ simplify in such a way that they lose the dependency on $\bfx$, and their expressions coincide with those of $\alpha_{\rm marg}(\bfy,\hat\bfy)$ in~\eqref{eq:marco_acceptance_probability} and $r_{\rm marg}(\bfy)$ in~\eqref{eq:marco_transition_kernel_rejection}, respectively.
The corresponding \OneblockMCMC transition then simplifies from the general \Joint expression in~\eqref{eq:jointMH_transition_kernel} to
\begin{equation}
\label{eq:oneblock_transition_kernel}
	k_{\tt B}((\bfx, \bfy), (\d\hat\bfx, \d\hat\bfy))
	=  \pcond(\d\hat\bfx \mid \hat\bfy) q_{\rm marg}(\hat{\bfy} \mid \bfy) \alpha_{\rm marg}(\bfy, \hat{\bfy}) \,\d\hat{\bfy}  + r_{\rm marg}(\bfy) \delta_{(\bfx,\bfy)}(\d\hat\bfx, \d \hat{\bfy}).
\end{equation}

Notably, the \OneblockMCMC sampling strategy uses the same proposal mechanism as \MarCo. In fact, upon acceptance, the two strategies have the same behavior: the chain state moves from $(\bfx,\bfy)$ to the proposed (and accepted) $(\bfx^\star,\bfy^\star)$. This is reflected in the fact that
the first term of the \OneblockMCMC transition kernel, $k_{\tt B}$, is identical to the first term of the \MarCo kernel, $k_{\tt M}$, in \eqref{eq:marco_transition_kernel}. 
However, upon rejection, \OneblockMCMC behaves as a traditional \Joint algorithm and rejects the entire joint proposal, $(\bfx^\star, \bfy^\star)$, as indicated by the Dirac measure $\delta_{(\bfx,\bfy)}$ in the second term of $k_{\tt B}$ in \eqref{eq:oneblock_transition_kernel}. %
In contrast, the second term in the \MarCo kernel $k_{\tt M}$ involves the Dirac measure $\delta_{\bfy}$, meaning that only the $\bfy$-component might undergo rejection.
Since the second term in the \OneblockMCMC transition kernel $k_{\tt B}$ differs from the corresponding term in the \MarCo transition kernel $k_{\tt M}$, it is clear that the two strategies implement two different stochastic processes.

In \Cref{sec:theory_marco_vs_one_block}, we show how the subtle, yet important, distinction in rejection behavior enables \MarCo to converge faster than \OneblockMCMC. 
We illustrate the differences between \Joint, \OneblockMCMC, and \MarCo in \Cref{fig:chain_comparison}.

\begin{figure}
   \centering

\definecolor{colorreject}{RGB}{231, 76, 60}
\definecolor{coloraccept}{RGB}{46, 204, 113}
\definecolor{colorstart}{RGB}{200, 200, 200}
\definecolor{colorpropose}{RGB}{210, 142, 0}

\begin{tikzpicture}

\tiny

\def\xymax{9}

\pgfplotsset{m1/.style={mark=*, mark size=3pt, mark options={draw=black, line width=0.5pt}}}
\tikzset{n1/.style={draw=black, line width=0.5pt, circle, minimum width=0.25cm, inner sep=0pt, outer sep=2pt}}
\tikzset{n11/.style={draw=black, line width=0.5pt, rectangle, minimum width=0.25cm, minimum height=0.25cm, inner sep=0pt, outer sep=2pt}}
\tikzset{n2/.style={draw=none, rectangle, minimum height=12pt, fill=white, rounded corners=3pt}} %

\def\xS{-5}
\def\yS{-5}

\def\xJ{2}
\def\yJ{5}

\pgfmathsetmacro{\xM}{\xS}
\pgfmathsetmacro{\yM}{\yJ}

\begin{groupplot}[group style={
		columns=3, rows=1,
		horizontal sep=0.01\linewidth},
		scale only axis,
		width=0.32\linewidth, height=0.32\linewidth,
		axis line style={line width=1pt, -, black},
		grid, grid style={lightgray, line width=0.25pt},
		ticks=none,
		xmin=-\xymax, xmax=\xymax, xtick={-\xymax, ..., \xymax}, xticklabel=\empty,
		ymin=-\xymax, ymax=\xymax,  ytick={-\xymax, ..., \xymax}, yticklabel=\empty
		]

	\nextgroupplot[title={\normalsize\Joint (\Jtt)}]

		\node[n1, fill, colorstart, draw=black] (S) at (axis cs:\xS, \yS) {};
		\node[n2, above=0.0cm of S.south, anchor=north, draw=colorstart, line width=1pt, fill=colorstart!25] (textS) {$(\bfx^t, \bfy^t)$};

		\node[n1, draw=none, fill=none] (P) at (axis cs: \xJ, \yJ) {};
		\node[draw=black, star, star points=5, star point ratio=2.25, fill=colorpropose, scale=0.5]  at (P) {};

		\node[n2, above right=0.0cm of P.north east, anchor=south west, draw=coloraccept, line width=1pt, fill=coloraccept!25, draw] (textP) {$(\widehat{\bfx})$};

		\node[n2, below right=0.0cm of P.south east, anchor=north west, draw=colorpropose, line width=1pt, fill=colorpropose!25, draw, dashed] (textP) {$(\bfx^\star, \bfy^\star)$};
		\node[n2, below=0.0cm of textP.south, anchor=north] {\color{colorpropose} \emph{proposed}};

		\node[n2, above right=0.0cm of P.north east, anchor=south west, draw=coloraccept, line width=1pt, fill=coloraccept!25, draw] (textA) {$(\bfx^{t+1}, \bfy^{t+1})$};
		\node[n2, above=0.0cm of textA.north, anchor=south] {\color{coloraccept} \emph{accepted}};

		\node[n2, right=0.5cm of textS.east, anchor=west, draw=colorreject, line width=1pt, fill=colorreject!25] (textR) {$(\bfx^{t+1}, \bfy^{t+1})$};
		\node[n2, below=0.0cm of textR.south, anchor=north] {\color{colorreject} \emph{rejected}};
		\node[n2] at ($(textS.east)!0.5!(textR.west)$) {$=$};

		\draw[ultra thick, ->, black, dashed] (S) -- node[n2, pos=0.5, sloped] {propose} (P);
		\draw[ultra thick, ->, coloraccept] (S)  to[bend left=45] node[n2, midway, sloped] {accept} (P);
		\draw[ultra thick, ->, colorreject] (S)  edge [loop right, looseness=12, min distance=0.5cm, out=90, in=0] node[n2, midway] {reject} (S);

	\nextgroupplot[title={\normalsize \OneblockMCMC (\Btt)}]

		\node[n1, fill, colorstart, draw=black] (S) at (axis cs:\xS, \yS) {};
		\node[n2, above=0.0cm of S.south, anchor=north, draw=colorstart, line width=1pt, fill=colorstart!25] (textS) {$(\bfx^t, \bfy^t)$};

		\node[n1, draw=black, pattern=north west lines] (H) at (\xM, \yM) {};

		\node[n1, draw=none, fill=none] (P) at (axis cs: \xJ, \yJ) {};
		\node[draw=black, star, star points=5, star point ratio=2.25, fill=colorpropose, scale=0.5]  at (P) {};

		\node[n2, below right=0.0cm of P.south east, anchor=north west, draw=colorpropose, dashed, line width=1pt, fill=colorpropose!25, draw] (textP) {$(\bfx^\star, \bfy^\star)$};
		\node[n2, below=0.0cm of textP.south, anchor=north] {\color{colorpropose} \emph{proposed}};

		\node[n2, above right=0.0cm of P.north east, anchor=south west, draw=coloraccept, line width=1pt, fill=coloraccept!25, draw] (textA) {$(\bfx^{t+1}, \bfy^{t+1})$};
		\node[n2, above=0.0cm of textA.north, anchor=south] {\color{coloraccept} \emph{accepted}};

		\node[n2, right=0.5cm of textS.east, anchor=west, draw=colorreject, line width=1pt, fill=colorreject!25] (textR) {$(\bfx^{t+1}, \bfy^{t+1})$};
		\node[n2, below=0.0cm of textR.south, anchor=north] {\color{colorreject} \emph{rejected}};
		\node[n2] at ($(textS.east)!0.5!(textR.west)$) {$=$};

		\draw[ultra thick, ->, black, dashed] (S) -- node[n2, pos=0.6, sloped] {propose $\bfy^\star$} (H);
		\draw[ultra thick, ->, black, dashed,
			decorate,
    			decoration={snake, amplitude=2pt, segment length=0.5cm, post length=5pt}
   	 		] (H) -- node[n2, pos=0.4, sloped, above=0.175cm, anchor=south]  {$\bfx^\star \sim \pcond( \d \bfx\mid \bfy^\star)$} (P);
		\draw[ultra thick, ->, coloraccept] (S)  to[bend left=15] node[n2, midway, sloped] {accept} (P);
		\draw[ultra thick, ->, colorreject] (S)  edge [loop right, looseness=12, min distance=0.5cm, out=90, in=0] node[n2, midway] {reject} (S);

	\nextgroupplot[title={\normalsize\MarCo (\Mtt)}]

		\node[n1, fill, colorstart, draw=black] (S) at (axis cs:\xS, \yS) {};
		\node[n2, above=0.0cm of S.south, anchor=north, draw=colorstart, line width=1pt, fill=colorstart!25] (textS) {$(\bfx^t, \bfy^t)$};

		\node[n1, fill=none, draw=none] (P) at (axis cs: \xM, \yM) {};
		\node[draw=black, star, star points=5, star point ratio=2.25, fill=colorpropose, scale=0.5]  at (P) {};

		\node[n1, fill, coloraccept, draw=black] (A) at (axis cs: \xJ, \yJ) {};
		\node[n1, fill, colorreject, draw=black] (R) at (axis cs: \xJ+1.5, \yS) {};

		\node[n2, above left=0.0cm of P.north west, anchor=south east, draw=colorpropose, line width=1pt, fill=colorpropose!25, draw, dashed] (textP) {$\bfy^\star$};
		\node[n2, above=0.0cm of textP.north, anchor=south, xshift=0.0cm] {\color{colorpropose} \emph{proposed}};

		\node[n2, above right=0.0cm of A.north east, anchor=south west, draw=coloraccept, line width=1pt, fill=coloraccept!25, draw] (textA) {$(\bfx^{t+1}, \bfy^{t+1})$};
		\node[n2, above=0.0cm of textA.north, anchor=south] {\color{coloraccept} \emph{accepted $\bfy^\star$}};

		\node[n2, below=0.0cm of R.south, anchor=north west, draw=colorreject, line width=1pt, fill=colorreject!25, xshift=-0.325cm] (textR) {$(\bfx^{t+1}, \bfy^{t+1})$};
		\node[n2, below=0.0cm of textR.south, xshift=0.0cm, anchor=north] {\color{colorreject} \emph{rejected $\bfy^\star$}};

		\draw[ultra thick, ->, black, dashed] (S) -- node[n2, pos=0.6, sloped] {propose $\bfy^\star$} (P);
		\draw[ultra thick, ->, coloraccept] (S)  to[bend left=45] node[n2, midway, sloped] {accept} (P);
		\draw[ultra thick, ->, colorreject] (S)  edge [loop right, looseness=12, min distance=0.5cm, out=135, in=45] node[n2, pos=0.6] {reject} (S);

		\draw[ultra thick, ->, black,
		decorate,
    		decoration={snake, amplitude=2pt, segment length=0.5cm, post length=5pt}
   	 	] (P) -- node[n2, pos=1, sloped, below=0.175cm, anchor=north] (sampA) {$\bfx^{t+1} \sim \pcond( \d \bfx\mid \bfy^\star)$} (A);

		\draw[ultra thick, ->, black,
		decorate,
    		decoration={snake, amplitude=2pt, segment length=0.5cm, post length=5pt}
   	 	] (S) -- node[n2, pos=1, above=0.175cm, anchor=south] {$\bfx^{t+1} \sim \pcond( \d \bfx\mid \bfy^t)$} (R);

\end{groupplot}

\end{tikzpicture}

\caption{Left-to-right: illustrative comparison of one step of \Joint, \OneblockMCMC, and \MarCo.  
Every algorithm starts at the same current state, $(\bfx^t, \bfy^t)$, in the lower left corner ({\color{gray} \bf gray} circle). 
The proposed state, $(\bfx^\star, \bfy^\star)$ or $\bfy^\star$, is indicated by a {\color{colorpropose} \bf yellow} star. 
The  proposal dynamics are indicated by dashed arrows ($\dashrightarrow$). 
Solid arrows ($\rightarrow$) designate the actual step to the next state, $(\bfx^{t+1}, \bfy^{t+1})$, after the accept/reject decision has been made. 
We explicitly indicate draws from the conditional distribution, $\pcond$, by squiggly arrows ($\leadsto$). 
Upon acceptance, the chain steps to the next state highlighted by the {\color{coloraccept}\bf green} box (upper right corner). 
Upon rejection, the chain steps to the next state highlighted by the {\color{colorreject}\bf red} box.  
The distinguishing feature of \MarCo is its rejection behavior. 
While the other algorithms remain at the current position upon rejection, \MarCo continues in the $\bfx$-direction by sampling from $\pcond$.
}
\label{fig:chain_comparison}
\end{figure}

\section{Conditionals Improve Conditioning and Mixing Time Upper Bounds}
\label{sec:theory_marco_vs_joint}

In \Cref{thm:MarCo_inherits_marginal} above, we proved that \MarCo acquires the same convergence behavior in the joint space as the underlying MH algorithm for the marginal distribution.
This property of \MarCo can lead to 
significantly improved convergence statistics compared with \Joint.
In this section, we make this argument precise by proving that \MarCo improves, relative to \Joint, the non-asymptotic mixing time bounds.

In particular, our framework naturally applies to mixing time upper bounds such as those developed in~\cite{dwivedi_log-concave_2019,wu_minimax_2022,altschuler_resolving_2022,altschuler_faster_2024,chewi_analysis_2025} for log-concave distributions.
These mixing times bounds depend on the regularity and dimension of the underlying distribution (described in \Cref{subsec:regularity}).
Since \MarCo draws samples from the marginal, one obtains a clear and immediate advantage in terms of dimension over MCMC in the joint space ($\ny$ vs. $\nx + \ny$). Moreover, we show in \Cref{sec:marco_vs_joint_conditioning} that the marginal has nicer regularity properties (in terms of the condition number). Thus, under appropriate tuning of MCMC hyperparameters, we show in \Cref{sec:spectral_consequences} that the mixing time upper bound for \MarCo is smaller than \Joint.
\subsection{Regularity of the Joint Distribution}
\label{subsec:regularity}

Our results rely on global curvature properties of the joint density.
Specifically, we build on the wealth of analysis available for \emph{log-concave} distributions, i.e., those of the form $\pi(\bfz) = \exp(-V(\bfz)),$ where $V$ is convex.
For such densities, we define a few quantities of regularity that provide informative scalar summaries.
We first define tight versions of these curvature constants and then state our regularity assumptions.%

\begin{definition}[Smoothness, Convexity, and Condition number]
  \label{def:tight-condition-number}
  Let $\pi(\bfz) = \exp(-V(\bfz))$ be a probability density on $\bbR^d$, where $V \in C^2(\bbR^d)$ is convex.
  Define its smoothness and convexity constants by
  \begin{align}
    L(\pi) & \coloneqq \inf\left\{ L \geq 0: \nabla^2 V(\bfz) \preceq L\bfI_d \text{ for every }\bfz \in \bbR^d \right\}, \label{eq:tight-smoothness-constant} \\
    m(\pi) & \coloneqq \sup\left\{ m \geq 0: m\bfI_d \preceq \nabla^2 V(\bfz) \text{ for every }\bfz \in \bbR^d \right\}. \label{eq:tight-strong-convexity-constant}
  \end{align}
  If $m(\pi) > 0$, we say that $\pi$ is strongly log-concave, and we define its condition number as
  \begin{equation}
    \label{eq:tight-condition-number}
    \kappa(\pi) \coloneqq L(\pi) / m(\pi).
  \end{equation}
If $m(\pi)=0$, we say that $\pi$ is weakly log-concave.
\end{definition}
The condition number defined in \eqref{eq:tight-condition-number} plays a central role in the upcoming mixing time upper bounds.
We also consider a more general setting based on the \emph{isoperimetric constant} that allows us to get a sharper quantity for mixing \cite{wu_minimax_2022,goodman_geometric_2005}.
\begin{definition}[Isoperimetric Constant and Condition Number]
  \label{def:isoperimetric-constant}
  Let $\pi$ be a probability distribution on $\bbR^d$.
  For nonempty sets $S_1,S_2 \subseteq \bbR^d$, define
  \begin{equation}
    D(S_1,S_2) \coloneqq \inf_{\bfz_1 \in S_1,\,\bfz_2 \in S_2} \|\bfz_1-\bfz_2\|_2.
  \end{equation}
  We say that $\pi$ satisfies an isoperimetric inequality with constant $\psi > 0$ if, for every measurable partition $(S_1,S_2,S_3)$ of $\bbR^d$ with $S_1$ and $S_2$ nonempty,
  \begin{equation}
    \label{eq:isoperimetric-inequality}
    \pi(S_3) \geq \psi\,D(S_1,S_2)\,\pi(S_1)\pi(S_2).
  \end{equation}
  The tightest isoperimetric constant $\psi(\pi)$ is the supremum of all constants $\psi$ for which~\eqref{eq:isoperimetric-inequality} holds.
  Moreover, if $\psi(\pi) > 0$, then define the isoperimetric condition number
  \begin{equation}
    \label{eq:isoperimetric-condition-number}
    \isocond(\pi) \coloneqq L(\pi) / \psi(\pi)^2.
  \end{equation}
\end{definition}
The smoothness constant $L$ controls the largest local curvature of the negative log-density and, consequently, the step size that a gradient-based sampler can safely take.
By contrast, $m$ and $\psi$ quantify aspects of the target geometry that govern the global ease of exploration: $m$ through a uniform lower bound on curvature, and $\psi$ more directly by guaranteeing the absence of bottlenecks. 
Small values of either can correspond to slow movement across the state space\footnote{Both yield conductance bounds, a central mechanism in mixing time theory; see, e.g.,~\cite[Section 3.1.1]{wu_minimax_2022}.}.
The ratios \eqref{eq:tight-condition-number} and \eqref{eq:isoperimetric-condition-number} combine these local and global effects into dimensionless scalars that describe sampling difficulty, and hence impact mixing time bounds.

\begin{remark}
  For a strongly log-concave $\pi$, we have $\psi(\pi)^2 \geq \log(2)^2 m(\pi)$ \cite[Theorem 4.4]{cousins_cubic_2014}.
  Therefore, any strongly log-concave distribution satisfies an isoperimetric inequality and $\isocond(\pi) \leq \log(2)^{-2} \kappa(\pi)$.
  The opposite is not true.
  For example, logistic product measures are log-concave with $m(\pi)=0$ and $L(\pi)<\infty$, while satisfying $\psi(\pi) > 0$ \cite[Section~4.2 and Theorem~4.5]{barthe_isoperimetry_2013}.
  Consequently, $\isocond(\pi)$ remains finite whereas $\kappa(\pi)$ is not defined.
\end{remark}

Next, we state the regularity assumptions for the joint distribution.
\begin{assumption}[Joint Regularity]
  \label{assump:joint_density_regularity}
  Assume the following regularity conditions:
  \begin{enumerate}[leftmargin=*, label={\bfseries (A\arabic*)}]
    \item %
    $\pjoint$ has a twice continuously differentiable negative log-density $V_{\rm joint}$, i.e.,\\
    $V_{\rm joint}(\bfx,\bfy)\coloneqq -\log\pjoint(\bfx,\bfy)\in C^2(\Rbb^\nx \times \Rbb^\ny)$;
\label{assump:continuously_differentiable}
    \item $\pjoint$ has finite smoothness constant, i.e., $L(\pjoint) < \infty$ (see \eqref{eq:tight-smoothness-constant});\label{assump:L_smooth}
    \item $\pjoint$ is log-concave, i.e., $m(\pjoint) \geq 0$ (see \eqref{eq:tight-strong-convexity-constant});\label{assump:log_concave}
    \item $\pjoint$ satisfies an isoperimetric inequality, i.e., $\psi(\pjoint) > 0$ (see \Cref{def:isoperimetric-constant}).\label{assump:isoperimetric}
  \end{enumerate}
\end{assumption}

\subsection{Marginalization Improves Conditioning}
\label{sec:marco_vs_joint_conditioning}

The goal of this section is to show that the isoperimetric condition number  for the marginal, $\isocond(\pmarg)$, is no greater than the isoperimetric condition number for the joint, $\isocond(\pjoint)$. 
To this end, we compare the numerator and denominator of~\eqref{eq:isoperimetric-condition-number} separately. 
First, we relate the isoperimetric constants \eqref{def:isoperimetric-constant} (in the denominator of $\isocond$) in \Cref{prop:marginal-isoperimetry} through direct evaluation of $\pmarg(\bfy)$ and $\pjoint(\bfx, \bfy)$. 
Then, we relate the smoothness constants \eqref{eq:tight-smoothness-constant}  (the numerator of $\isocond$), through linear algebraic spectral arguments. 

\begin{proposition}[Marginalization Isoperimetric Bounds]
  \label{prop:marginal-isoperimetry}
  If $\psi(\pjoint) < \infty$, then
  \begin{equation}
    \label{eq:marginal-isoperimetry}
    \psi(\pjoint) \leq \psi(\pmarg).
  \end{equation}
\end{proposition}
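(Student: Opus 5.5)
The plan is to pull back any partition of the marginal space $\Rbb^\ny$ to a cylinder partition of the joint space $\Rbb^\nx\times\Rbb^\ny$. We then apply the isoperimetric inequality for $\pjoint$ to the cylinders and read off an inequality for $\pmarg$ with the same constant. Since $\psi(\pmarg)$ is defined as a supremum of admissible constants, it suffices to show that every $\psi$ for which \eqref{eq:isoperimetric-inequality} holds for $\pjoint$ also satisfies \eqref{eq:isoperimetric-inequality} for $\pmarg$. Applying this to $\psi=\psi(\pjoint)$ (or to an approximating sequence, since $\psi(\pjoint)<\infty$) then gives \eqref{eq:marginal-isoperimetry}.

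First, fix a measurable partition $(S_1,S_2,S_3)$ of $\Rbb^\ny$ with $S_1,S_2$ nonempty, and define the cylinders $\tilde S_i \coloneqq \Rbb^\nx\times S_i$. These form a measurable partition of the joint space, and $\tilde S_1,\tilde S_2$ are nonempty. By the definition \eqref{eq:marginal_y} of the marginal, $\pjoint(\tilde S_i)=\pmarg(S_i)$ for each $i$.

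Second, we compare distances and show that $D(\tilde S_1,\tilde S_2)= D(S_1,S_2)$. For the inequality $\ge$, take $\bfz_1=(\bfx_1,\bfy_1)\in\tilde S_1$ and $\bfz_2=(\bfx_2,\bfy_2)\in\tilde S_2$; then $\|\bfz_1-\bfz_2\|_2\ge\|\bfy_1-\bfy_2\|_2\ge D(S_1,S_2)$. For the inequality $\le$, choose $\bfx_1=\bfx_2$. Only $\ge$ is actually needed.

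Third, we apply the joint isoperimetric inequality to the cylinders:
\begin{equation*}
\pmarg(S_3)=\pjoint(\tilde S_3)\ge \psi\, D(\tilde S_1,\tilde S_2)\,\pjoint(\tilde S_1)\pjoint(\tilde S_2)= \psi\, D(S_1,S_2)\,\pmarg(S_1)\pmarg(S_2).
\end{equation*}
The partition was arbitrary, so $\psi$ is an admissible constant for $\pmarg$. Taking the supremum over all admissible $\psi$ for $\pjoint$ yields $\psi(\pjoint)\le\psi(\pmarg)$.

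There is no substantive obstacle; the only step needing care is the measure-theoretic bookkeeping. We must check that the cylinders are Borel, that Fubini justifies $\pjoint(\Rbb^\nx\times S)=\pmarg(S)$, and that the supremum argument goes through. The finiteness hypothesis $\psi(\pjoint)<\infty$ only guarantees that the left-hand side of \eqref{eq:marginal-isoperimetry} is a genuine real number. If $\psi(\pjoint)=0$, the claim is trivial.
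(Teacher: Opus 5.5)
Your proposal is correct and is essentially the paper's proof. You lift the partition to the cylinders $\bbR^\nx\times S_i$, use $\pjoint(\bbR^\nx\times S_i)=\pmarg(S_i)$ and $D(\bbR^\nx\times S_1,\bbR^\nx\times S_2)=D(S_1,S_2)$ (of which, as you note, only $\geq$ is needed), and conclude by taking the supremum over admissible constants.
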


\begin{proof}
  Let $(S_1,S_2,S_3)$ be a measurable partition of $\bbR^\ny$, and define the corresponding partition of the joint space by $A_i \coloneqq \bbR^\nx \times S_i$, for $i \in \{1,2,3\}$.
  By the definition of the marginal, $\pjoint(A_i)=\pmarg(S_i)$ for each $i$.
  Moreover,
  \begin{equation*}
    D(A_1,A_2)
    = \inf_{\substack{\bfx_1,\bfx_2\in\bbR^\nx\\ \bfy_1\in S_1,\,\bfy_2\in S_2}}
    \left( \|\bfx_1-\bfx_2\|_2^2 + \|\bfy_1-\bfy_2\|_2^2 \right)^{1 / 2}
    = D(S_1,S_2),
  \end{equation*}
  where %
  the second equality is obtained by taking $\bfx_1=\bfx_2$.
  Thus, if $\psi$ is any admissible isoperimetric constant for $\pjoint$, applying~\eqref{eq:isoperimetric-inequality} to $(A_1,A_2,A_3)$ gives
  \begin{equation*}
    \pmarg(S_3)  = \pjoint(A_3)
    \geq \psi\,D(A_1,A_2)\,\pjoint(A_1)\pjoint(A_2)
    = \psi\,D(S_1,S_2)\,\pmarg(S_1)\pmarg(S_2).
  \end{equation*}
  Hence, $\psi$ is also an admissible isoperimetric constant for $\pmarg$, and
  \eqref{eq:marginal-isoperimetry} follows by taking the supremum in $\psi$.
\end{proof}

Next, we analyze the spectral properties of the marginal and joint densities to compare the smoothness parameters. 
We start with \Cref{lem:marginal-hessian-identity} to connect the Hessian of the marginal negative log-density (or marginal potential), $V_{\rm marg}: \Rbb^\ny \to \Rbb$ given by
    \begin{align}
     \label{def:Jmarg}
        V_{\rm marg}(\bfy) \coloneqq -\log \pmarg(\bfy)
    \end{align}
to the Hessian of the joint potential, $V_{\rm joint}(\bfx, \bfy)$, with respect to $\bfy$.

\begin{lemma}[Marginal Hessian]
  \label{lem:marginal-hessian-identity}
  Under \Cref{assump:joint_density_regularity}, the marginal potential satisfies,
  \begin{equation}
    \label{eq:marginal-hessian-identity}
    \nabla^2 V_{\rm marg}(\bfy)
    = \mathbb{E}_{\bfx \sim \pcond(\cdot \mid\bfy)}\!\bigl[\nabla_{\bfy}^2 V_{\rm joint}(\bfx,\bfy)\bigr]
    - \operatorname{Cov}_{\bfx\sim \pcond(\cdot \mid\bfy)}\!\bigl[\nabla_{\bfy} V_{\rm joint}(\bfx,\bfy)\bigr].
  \end{equation}
\end{lemma}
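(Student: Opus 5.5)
The plan is to differentiate the marginal integral twice under the integral sign, using only the structure $\pmarg(\bfy)=\int_{\Rbb^\nx} e^{-V_{\rm joint}(\bfx,\bfy)}\,\d\bfx$. Write $Z(\bfy)\coloneqq\pmarg(\bfy)$, so that $V_{\rm marg}=-\log Z$. If we may exchange derivatives and integrals, then
\begin{align*}
\nabla Z(\bfy) &= -\int \nabla_\bfy V_{\rm joint}\, e^{-V_{\rm joint}}\,\d\bfx,\\
\nabla^2 Z(\bfy) &= \int \bigl(\nabla_\bfy V_{\rm joint}\nabla_\bfy V_{\rm joint}\t - \nabla^2_\bfy V_{\rm joint}\bigr) e^{-V_{\rm joint}}\,\d\bfx .
\end{align*}
Next, use $\nabla^2(-\log Z) = -\nabla^2 Z/Z + (\nabla Z)(\nabla Z)\t/Z^2$, together with $\pcond(\bfx\mid\bfy)=e^{-V_{\rm joint}(\bfx,\bfy)}/Z(\bfy)$. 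These give
\begin{align*}
\nabla V_{\rm marg} &= \mathbb{E}_{\pcond}[\nabla_\bfy V_{\rm joint}],\\
\nabla^2 V_{\rm marg} &= \mathbb{E}_{\pcond}[\nabla^2_\bfy V_{\rm joint}] - \mathbb{E}_{\pcond}[\nabla_\bfy V_{\rm joint}\nabla_\bfy V_{\rm joint}\t] + \mathbb{E}_{\pcond}[\nabla_\bfy V_{\rm joint}]\,\mathbb{E}_{\pcond}[\nabla_\bfy V_{\rm joint}]\t .
\end{align*}
The last two terms together are exactly $-\operatorname{Cov}_{\pcond}[\nabla_\bfy V_{\rm joint}]$, which is \eqref{eq:marginal-hessian-identity}. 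A cleaner way to organize the same computation is to differentiate $\nabla V_{\rm marg}(\bfy)=\int \nabla_\bfy V_{\rm joint}\,\pcond(\bfx\mid\bfy)\,\d\bfx$ once more. Since $\nabla_\bfy \pcond = \pcond\,(\nabla V_{\rm marg}-\nabla_\bfy V_{\rm joint})$, this produces the covariance directly.

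The main obstacle is justifying differentiation under the integral sign twice, which needs dominated-convergence bounds on $e^{-V_{\rm joint}}$, $|\nabla_\bfy V_{\rm joint}|e^{-V_{\rm joint}}$, $|\nabla_\bfy V_{\rm joint}|^2 e^{-V_{\rm joint}}$ and $\|\nabla^2_\bfy V_{\rm joint}\|e^{-V_{\rm joint}}$, locally uniformly in $\bfy$. I would derive these from \Cref{assump:joint_density_regularity} as follows.

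\textbf{Hessian term.} By \ref{assump:L_smooth} and \ref{assump:log_concave}, $0\preceq\nabla^2 V_{\rm joint}\preceq L\bfI$, so $\|\nabla^2_\bfy V_{\rm joint}\|\le L$ everywhere.

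\textbf{Gradient term.} The gradient is $L$-Lipschitz, so $|\nabla_\bfy V_{\rm joint}(\bfx,\bfy)|\le |\nabla V_{\rm joint}(\bfzero)|+L(|\bfx|+|\bfy|)$, which grows at most linearly.

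\textbf{Exponential tails.} Since $e^{-V_{\rm joint}}$ is an integrable log-concave density, it has exponential tails: there exist $a>0$ and $b$ with $V_{\rm joint}(\bfx,\bfy)\ge a(|\bfx|+|\bfy|)-b$. This is a standard fact for integrable convex potentials.

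Combining these, for $\bfy$ in a compact set all the integrands are dominated by $C(1+|\bfx|^2)e^{-a|\bfx|+b}$, which is integrable in $\bfx$. Moreover $Z(\bfy)>0$ everywhere, since the density is continuous and positive. Hence $V_{\rm marg}\in C^2$, every expectation above is finite, and the formal computation is rigorous.
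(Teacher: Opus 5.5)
Your proposal is correct and is essentially the paper's proof. The paper likewise shows $\nabla V_{\rm marg}(\bfy)=\mathbb{E}_{\bfx\sim\pcond(\cdot\mid\bfy)}[\nabla_\bfy V_{\rm joint}]$ and then differentiates once more using $\nabla_\bfy\log\pcond(\bfx\mid\bfy)=-\nabla_\bfy V_{\rm joint}(\bfx,\bfy)+\nabla V_{\rm marg}(\bfy)$, which is your ``cleaner'' variant; your primary route through $\nabla^2 Z/Z$ and $\nabla Z\,\nabla Z^\top/Z^2$ is just a reorganization of the same computation. It also justifies the interchanges exactly as you do, with the log-concave exponential envelope, linear growth of the gradient and a bounded Hessian giving the dominating function $C(1+\|\bfx\|^2)e^{-a\|\bfx\|}$.
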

\begin{proof}
    This can be viewed as the Fisher--Louis identity \cite[Proposition~10.1.6]{cappe_inference_2005}, specialized to our setting, notation, and assumptions; see Appendix~\ref{appendix:marginal-hessian-identity} for the proof.
\end{proof}

From the connection between the marginal and joint potentials,  we show that the marginal distribution inherits the regularity properties of the joint density regularity assumptions.

\begin{proposition}[Marginal Spectral Bounds]
  \label{prop:marginal-hessian}
  Suppose $\pjoint$ satisfies $\Cref{assump:joint_density_regularity}$.
  Then, for all $\bfy\in \Rbb^\ny$, 
  \begin{align}\label{eq:implication_L}
      \nabla^2 V_{\rm marg}(\bfy) \preceq L(\pjoint) \bfI_{\ny}, \quad \text{which implies} \quad  L(\pmarg) \leq L(\pjoint).
  \end{align}
  Moreover, if $\pjoint$ is strongly log-concave, i.e., $m(\pjoint) > 0$, then, for all $\bfy\in \Rbb^\ny$, 
  \begin{align}\label{eq:implication_m}
      m(\pjoint) \bfI_{\ny} \preceq \nabla^2 V_{\rm marg}(\bfy), \quad \text{which implies} \quad m(\pjoint) \leq m(\pmarg).
  \end{align}
\end{proposition}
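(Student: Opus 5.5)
The plan is to start from the Fisher--Louis identity of \Cref{lem:marginal-hessian-identity} and handle the two bounds separately. For the upper bound, the covariance term $\operatorname{Cov}_{\bfx\sim\pcond(\cdot\mid\bfy)}[\nabla_\bfy V_{\rm joint}]$ is positive semidefinite, so it can simply be dropped:
\begin{align*}
\nabla^2 V_{\rm marg}(\bfy) \preceq \mathbb{E}_{\bfx\sim\pcond(\cdot\mid\bfy)}\bigl[\nabla^2_{\bfy} V_{\rm joint}(\bfx,\bfy)\bigr].
\end{align*}
The block $\nabla^2_\bfy V_{\rm joint}$ is the lower-right $\ny\times\ny$ principal submatrix of the full Hessian $\nabla^2 V_{\rm joint}\preceq L(\pjoint)\bfI$. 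Testing with vectors of the form $(\bfzero,\bfv)$ then gives $\nabla^2_\bfy V_{\rm joint}\preceq L(\pjoint)\bfI_\ny$ pointwise. Averaging against the probability measure $\pcond(\cdot\mid\bfy)$ preserves this Loewner bound. Taking the infimum in \eqref{eq:tight-smoothness-constant} yields $L(\pmarg)\le L(\pjoint)$. Convexity of $V_{\rm marg}$ is not needed for this step; log-concavity of $\pmarg$ follows anyway from Prékopa--Leindler, or from the lower bound below.

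The lower bound cannot use the identity naively, since the subtracted covariance term has the wrong sign. Instead I would use a Brascamp--Lieb (Schur complement) argument. Write $\bfH=\nabla^2 V_{\rm joint}$ in blocks $\bfH_{xx},\bfH_{xy},\bfH_{yy}$. Under strong log-concavity, the conditional potential $\bfx\mapsto V_{\rm joint}(\bfx,\bfy)$ is $m(\pjoint)$-strongly convex with Hessian $\bfH_{xx}\succeq m(\pjoint)\bfI$. Fix $\bfv\in\bbR^\ny$ and set $f(\bfx)=\bfv\t\nabla_\bfy V_{\rm joint}(\bfx,\bfy)$, so that $\nabla_\bfx f=\bfH_{xy}\bfv$. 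The Brascamp--Lieb inequality for the conditional then gives
\begin{align*}
\bfv\t\operatorname{Cov}[\nabla_\bfy V_{\rm joint}]\bfv=\operatorname{var}_{\pcond}(f)\le \mathbb{E}_{\pcond}\bigl[\bfv\t\bfH_{yx}\bfH_{xx}^{-1}\bfH_{xy}\bfv\bigr].
\end{align*}
Substituting into \eqref{eq:marginal-hessian-identity} gives
\begin{align*}
\nabla^2V_{\rm marg}(\bfy)\succeq\mathbb{E}_{\pcond}\bigl[\bfH_{yy}-\bfH_{yx}\bfH_{xx}^{-1}\bfH_{xy}\bigr].
\end{align*}
The integrand is the Schur complement of $\bfH_{xx}$ in $\bfH$. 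Because $\bfH\succeq m(\pjoint)\bfI$, this Schur complement is $\succeq m(\pjoint)\bfI_\ny$. To see this, apply the same reasoning to $\bfH-m\bfI\succeq0$: its Schur complement is $\bfH_{yy}-m\bfI-\bfH_{yx}(\bfH_{xx}-m\bfI)^{-1}\bfH_{xy}\succeq 0$, and $(\bfH_{xx}-m\bfI)^{-1}\succeq\bfH_{xx}^{-1}$. When $\bfH_{xx}-m\bfI$ is singular, I would use a limiting argument with $m-\epsilon$. Averaging the pointwise bound then gives \eqref{eq:implication_m}, and taking the supremum yields $m(\pjoint)\le m(\pmarg)$.

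The main obstacle is the lower bound, specifically justifying Brascamp--Lieb for the conditional. This requires integrability of $f$ and its gradient, which follows from $L$-smoothness together with strong log-concavity, giving Gaussian tails. It also requires strict positivity of $\bfH_{xx}$, which holds by strong convexity. The Schur complement monotonicity step is routine linear algebra.
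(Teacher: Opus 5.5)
Your proposal is correct and matches the paper's proof step for step. For the upper bound you drop the positive semidefinite covariance term and use the principal-submatrix bound. For the lower bound you apply Brascamp--Lieb to $\pcond(\cdot\mid\bfy)$, which reduces it to the expected Schur complement of $\nabla^2 V_{\rm joint}$. The only difference is that the paper cites an interlacing result to place the Schur complement's spectrum above $m(\pjoint)$, whereas you prove this directly via $\bfH - m\bfI\succeq 0$ and $(\bfH_{xx}-m\bfI)^{-1}\succeq\bfH_{xx}^{-1}$; that argument is valid.
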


\begin{proof}
For the sake of notation, we write $V_{\rm joint}(\bfx,\bfy)$ simply as $V(\bfx, \bfy)$.

First, note the marginal of a log-concave distribution remains log-concave~\cite{prekopa_logarithmic_1973}.
    Now, since $\nabla_{\bfy}^2 V(\bfx, \bfy)$ is a principal submatrix of the full Hessian, the $L$-smoothness of the $\pjoint$ \ref{assump:L_smooth} further implies $\nabla_{\bfy}^2 V(\bfx, \bfy) \preceq L(\pjoint) \bfI_{\ny}$ for all  $(\bfx, \bfy)\in \Rbb^{\nx} \times \Rbb^\ny$.
    Following \Cref{lem:marginal-hessian-identity} and by the positive semidefiniteness of covariance matrices, we obtain the bound,
    \[
        \nabla^2 V_{\rm marg}(\bfy)
        \preceq
        \mathbb{E}_{\bfx\sim \pcond(\cdot \mid\bfy)}\!\bigl[\nabla_{\bfy}^2 V(\bfx,\bfy)\bigr]
        \preceq
        L(\pjoint) \bfI_{\ny}.
    \]
    For the lower bound, the Brascamp--Lieb covariance inequality~\cite[Theorem 4.2]{brascamp_extensions_1976} applied to $\pcond(\bfx\mid\bfy)\propto \exp(-V(\bfx,\bfy))$ gives
    \[
        \operatorname{Cov}_{\bfx \sim \pcond(\cdot\mid\bfy)}\!\bigl(\nabla_\bfy V(\bfx, \bfy)\bigr)
        \preceq
        \mathbb{E}_{\bfx\sim \pcond(\cdot \mid\bfy)}\!\bigl[\nabla_{\bfy\bfx}^2 V(\bfx, \bfy) \left(\nabla_{\bfx}^2 V(\bfx, \bfy)\right)^{-1}\nabla_{\bfx\bfy}^2V(\bfx, \bfy)\bigr],
    \]
    for all $\bfy\in \Rbb^{\ny}$.
    Combined with  \Cref{lem:marginal-hessian-identity}, we obtain
    \begin{equation*}
        \label{eq:marginal-hessian-lower}
        \mathbb{E}_{\bfx\sim \pcond(\cdot \mid\bfy)}\!\bigl[ \underbrace{\nabla_\bfy^2 V(\bfx, \bfy) - \nabla_{\bfy\bfx}^2 V(\bfx, \bfy) \left(\nabla_{\bfx}^2 V(\bfx, \bfy)\right)^{-1}\nabla_{\bfx\bfy}^2V(\bfx, \bfy)}_{\bfS(\bfx, \bfy)} \bigr]
        \preceq  \nabla^2 V_{\rm marg}(\bfy)
    \end{equation*}
    where $\bfS(\bfx, \bfy)$ is the Schur complement of $\nabla^2 V(\bfx, \bfy)$.
    The spectrum of $\bfS(\bfx, \bfy)$ is contained within the spectrum of $\nabla^2 V(\bfx, \bfy)$ \cite[Theorem 5]{smith_interlacing_1992}.
    Following Assumption~\ref{assump:log_concave}, we then get the lower bound $m(\pjoint) \bfI_{\ny} \preceq \nabla^2 V_{\rm marg}(\bfy)$.

    The final implications of curvature constants in \eqref{eq:implication_L} and \eqref{eq:implication_m} follow since $L(\pmarg)$ and $m(\pmarg)$ are defined as the tightest possible constants bounding the spectrum.
\end{proof}
    The proof in \Cref{prop:marginal-hessian} illustrates the connection between the conditional-marginal structure and the underlying spectral properties of the density.
    This mirrors the conditioning theorems of variable projection \cite{chowdhary_boost_2026,newman_train_2021}, which were a major motivation for this work.
    As an aside, \Cref{prop:marginal-hessian} could also be shown from Pr\`{e}kopa's theorem~\cite[Theorem 6]{prekopa_logarithmic_1973}.
    One would consider marginals of constructions of the form $\pjoint(\bfx,\bfy)\exp\left( m(\pjoint) \|\bfy\|_2^2\right)$ or $\pjoint(\bfx,\bfy)\exp\left( -L(\pjoint) \|\bfy\|_2^2\right)$ and use Pr\`{e}kopa to reason about their log-concavity.

Employing both \Cref{prop:marginal-hessian,prop:marginal-isoperimetry}, we conclude with the desired conditioning result for the marginal, and therefore for \MarCo.

\begin{corollary}[Marginalization Improves Conditioning]
  \label{cor:marginal-isocond}
  If $\pjoint$ satisfies~\Cref{assump:joint_density_regularity},
  \begin{equation}
    \label{eq:marginal-isoperimetric-condition-number}
    \isocond(\pmarg) \leq \isocond(\pjoint).
  \end{equation}
In the special case where $\pjoint$ is strongly log-concave, i.e., $m(\pjoint) > 0$, we have
  \begin{equation}
  \label{eq:ineq_kappa_pi}
    \kappa(\pmarg) \leq \kappa(\pjoint).
  \end{equation}
\end{corollary}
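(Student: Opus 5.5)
The plan is to combine the two comparison results already established, one for the numerator and one for the denominator of the condition numbers, using the definitions \eqref{eq:isoperimetric-condition-number} and \eqref{eq:tight-condition-number}.

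For the isoperimetric statement \eqref{eq:marginal-isoperimetric-condition-number}, first confirm that both quantities are well defined. By \ref{assump:isoperimetric}, $\psi(\pjoint)>0$. To apply \Cref{prop:marginal-isoperimetry} we also need $\psi(\pjoint)<\infty$. I would get this by noting that a probability measure on $\bbR^d$ cannot satisfy \eqref{eq:isoperimetric-inequality} with $\psi=\infty$. For instance, take $S_3=\emptyset$ and split $\bbR^d$ by a half-space into $S_1,S_2$ of positive mass. Then $D(S_1,S_2)=0$, which gives no contradiction, so this choice fails. Instead, take a thin slab $S_3$ of width $\delta>0$ separating two half-spaces. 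As $\delta\to 0$, $\pi(S_3)\to 0$, while $D=\delta$ and $\pi(S_1)\pi(S_2)$ stays bounded below. Hence $\psi\le \pi(S_3)/(\delta\,\pi(S_1)\pi(S_2))$, which is finite for small $\delta$ because $\pjoint$ has a bounded continuous density by \ref{assump:continuously_differentiable}--\ref{assump:log_concave}.

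With this in hand, \Cref{prop:marginal-isoperimetry} gives $0<\psi(\pjoint)\le\psi(\pmarg)$, so $\isocond(\pmarg)$ is defined. \Cref{prop:marginal-hessian}, via \eqref{eq:implication_L}, gives $L(\pmarg)\le L(\pjoint)$. Dividing a smaller nonnegative numerator by a larger positive denominator then yields
\begin{equation*}
\isocond(\pmarg)=\frac{L(\pmarg)}{\psi(\pmarg)^2}\le\frac{L(\pjoint)}{\psi(\pjoint)^2}=\isocond(\pjoint).
\end{equation*}

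For the strongly log-concave case \eqref{eq:ineq_kappa_pi}, the second part of \Cref{prop:marginal-hessian}, via \eqref{eq:implication_m}, gives $0<m(\pjoint)\le m(\pmarg)$. Hence $\pmarg$ is strongly log-concave and $\kappa(\pmarg)$ is defined. Combining this with $L(\pmarg)\le L(\pjoint)$ gives $\kappa(\pmarg)=L(\pmarg)/m(\pmarg)\le L(\pjoint)/m(\pjoint)=\kappa(\pjoint)$.

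The main obstacle is bookkeeping rather than analysis. One part is checking the finiteness hypothesis of \Cref{prop:marginal-isoperimetry}; the slab argument above is my first attempt. The other is confirming that $V_{\rm marg}$ is $C^2$, so that \Cref{def:tight-condition-number} applies to $\pmarg$. This follows from the differentiation under the integral already used in \Cref{lem:marginal-hessian-identity}.
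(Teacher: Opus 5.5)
Your proposal is correct and follows the paper's approach: the paper gives no separate proof and obtains the corollary by combining $L(\pmarg)\le L(\pjoint)$ and $m(\pjoint)\le m(\pmarg)$ from \Cref{prop:marginal-hessian} with $\psi(\pjoint)\le\psi(\pmarg)$ from \Cref{prop:marginal-isoperimetry}, exactly as you do. Your check that $\psi(\pjoint)<\infty$ fills a hypothesis the paper leaves unchecked, and it is simpler than your limit argument: for any fixed slab of width $\delta>0$ between two half-spaces of positive mass, $\psi\le \pi(S_3)/(\delta\,\pi(S_1)\pi(S_2))\le 1/(\delta\,\pi(S_1)\pi(S_2))<\infty$, so you need neither $\delta\to0$ nor a bounded density.
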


\subsection{\MarCo Improves Mixing Time Upper Bounds}
\label{sec:spectral_consequences}
Having proved that marginalization preserves, if not improves, the conditioning of the density, we now turn to the implications of this fact on mixing time upper bounds.
In order to compare the two, we begin with a set of assumptions regarding said bounds and the MCMC strategies being compared.

\begin{assumption} \label{assump:mixing_time_comparison}
To compare mixing time bounds, consider implementations satisfying the following.
    \begin{enumerate}[label={\bfseries (B\arabic*)}, leftmargin=*]
        \item \label{assump:same_algorithm} The transition kernel of \Joint and that of the $\bfy$-marginal chain in \MarCo are instances of the same MCMC kernel family, targeting $\pjoint$ and $\pmarg$, respectively.
        \item \label{assump:initial_distribution} The initial distribution of \Joint is $\pjoint^0(\d\bfx, \d\bfy) = \pcond(\d\bfx \mid \bfy) \pmarg^0(\d\bfy)$,
            and $\pmarg^0(\d\bfy)$ is used as the initial distribution for the $\bfy$-chain in \MarCo.
            Moreover, $\pjoint^0$ is a warm start~\eqref{eq:characterization_beta_warm} for $\pjoint$.
    \item \label{assump:hyperpar_and_monotone_mixing_time} For the MCMC family considered, there exists a hyperparameter tuning function that depends on the mixing accuracy $\varepsilon$ \eqref{eq:mixing_time}, the state space dimension, the warm start parameter $\beta$ \eqref{eq:characterization_beta_warm}, and geometric properties of the target distribution, $\pi\in \Pcal(\Rbb^d)$, from \Cref{assump:joint_density_regularity}; i.e., there exists a map
    \begin{align*}\label{eq:hyperparameter_tuning}
	(\varepsilon,d,\warmstart{\pi^0}, L(\pi),\psi(\pi)) \mapsto \{\text{tuned MCMC hyperparameters}\}.
	\end{align*}
    This tuning function is such that the considered MCMC method with tuned hyperparameters has a mixing time upper bound $t_{\rm mix}^\uparrow(\varepsilon; d,\warmstart{\pi^0},\isocond(\pi))$ that is strictly increasing in $d$ and non-decreasing in $\warmstart{\pi^0}$ and $\isocond(\pi)$.
    \end{enumerate}
\end{assumption}

An example of an MCMC family that satisfies~\ref{assump:hyperpar_and_monotone_mixing_time}
is a minor variation %
of \MALA \cite[Appendix D.1]{altschuler_faster_2024}.
Translating \cite[Theorem~3]{wu_minimax_2022} to our notation, the \MALA hyperparameter, i.e., its step size, is tuned by
\begin{equation*}
  \label{eq:weak-mala-example-step-size}
  h_{\MALA} = \frac{c_0}{L(\pi) \sqrt{d}\,\log^2\left( \max\left\{d,\frac{L(\pi)}{\psi(\pi)^2},\frac{\warmstart{\pi^0}}{\varepsilon},c_2\right\}\right)}
\end{equation*}
and the mixing time upper bound is given by
\begin{equation*}
  \label{eq:weak-mala-example-mixing-bound}
  t_{\rm mix,\, \MALA}^\uparrow(\varepsilon; d,\warmstart{\pi^0},\isocond(\pi))
  = c_1\left\{
      \isocond(\pi)\sqrt{d}\,\log^3\left( \max\left\{d, \isocond(\pi),\frac{\warmstart{\pi^0}}{\varepsilon},c_2\right\}\right)
      + \log\left(\frac{2\warmstart{\pi^0}}{\varepsilon}\right)
    \right\},
\end{equation*}
where $c_0, c_1, c_2 > 0$ are universal constants.
The tuned step size relies only on the parameters $(\varepsilon,d,\warmstart{\pi^0},L(\pi),\psi(\pi))$ and the mixing time upper bound is strictly increasing in $d$ and non-decreasing in $\warmstart{\pi^0}$ and $\isocond$, thereby satisfying Assumption~\ref{assump:hyperpar_and_monotone_mixing_time}.%

First, we prove that the warm start parameter does not increase under marginalization.
\begin{lemma}[Warm Start Under Marginalization]
\label{lem:warm_start_preserved}
	Assume $\pjoint^0$ satisfies \ref{assump:initial_distribution} and is a $\beta$-warm start for the target distribution, $\pjoint$.
	Then, $\pmarg^0$ from \ref{assump:initial_distribution} is also a $\beta$-warm start for the marginal target distribution, $\pmarg$.
    Moreover, $\warmstart{\pmarg^0} \leq \warmstart{\pjoint^0}$.
\end{lemma}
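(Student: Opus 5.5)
The approach is to integrate the pointwise warm-start inequality over $\bfx$, using the factorization in \ref{assump:initial_distribution} together with the marginal-conditional factorization \eqref{eq:joint_factorization} of the target.

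Suppose $\pjoint^0$ is a $\beta$-warm start for $\pjoint$. By \eqref{eq:characterization_beta_warm} this means
\[
\pjoint^0(\bfx,\bfy)\le \beta\,\pjoint(\bfx,\bfy)
\]
for almost every $(\bfx,\bfy)\in\Rbb^\nx\times\Rbb^\ny$. By \ref{assump:initial_distribution}, $\pjoint^0$ has density $\pcond(\bfx\mid\bfy)\,\pmarg^0(\bfy)$. If $\pmarg^0$ had no density, then $\pjoint^0$ would have no Lebesgue density either, and the warm-start hypothesis would fail; so we may take $\pmarg^0$ to have a density.

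We integrate both sides of the inequality in $\bfx$ over $\Rbb^\nx$. Integration preserves the a.e. inequality in the following sense: by Tonelli, the exceptional null set in $\Rbb^{\nx+\ny}$ has $\bfx$-sections of measure zero for a.e. $\bfy$. Since $\int\pcond(\bfx\mid\bfy)\,\d\bfx=1$, the left side integrates to $\pmarg^0(\bfy)$. By \eqref{eq:marginal_y}, the right side integrates to $\beta\,\pmarg(\bfy)$. This gives
\[
\pmarg^0(\bfy)\le\beta\,\pmarg(\bfy)
\]
for a.e. $\bfy$. Hence $\pmarg^0$ is a $\beta$-warm start for $\pmarg$.

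For the infimum statement, the argument above applies to every admissible $\beta$ for the joint. So the set of admissible constants for the joint is contained in the set of admissible constants for the marginal. Taking infima gives $\warmstart{\pmarg^0}\le\warmstart{\pjoint^0}$.

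In fact equality should hold. Dividing the joint inequality by $\pcond(\bfx\mid\bfy)$, wherever it is positive, shows that the joint and marginal warm-start conditions are equivalent. The statement only asks for $\le$, however.

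The only delicate step is the measure-theoretic one: the a.e. qualifiers and the existence of densities. The algebra itself is immediate.
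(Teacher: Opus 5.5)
Your proposal is correct and follows essentially the same argument as the paper. You integrate the pointwise inequality $\pjoint^0(\bfx,\bfy)\le\beta\,\pjoint(\bfx,\bfy)$ over $\bfx$ to obtain $\pmarg^0(\bfy)\le\beta\,\pmarg(\bfy)$ for a.e.\ $\bfy$, then pass to the infimum; your Tonelli bookkeeping and your remark that under \ref{assump:initial_distribution} the two warm-start conditions are equivalent (so in fact $\warmstart{\pmarg^0}=\warmstart{\pjoint^0}$) are correct refinements that the paper does not state.
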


\begin{proof}%
Integrating the warm-start inequality \eqref{eq:characterization_beta_warm} over $\bfx$ gives, for a.e. $\bfy \in \bbR^\ny$,
\[
    \pmarg^0(\bfy)
    = \int_{\bbR^\nx} \pi^0(\bfx,\bfy)\,\d\bfx
    \leq \warmstart{\pjoint^0} \int_{\bbR^\nx} \pi(\bfx,\bfy)\,\d\bfx
    = \warmstart{\pjoint^0} \pmarg(\bfy)
\]
Hence, $\pmarg^0$ is a $\beta$-warm start to $\pmarg$ with constant $\warmstart{\pjoint^0}$.
As $\warmstart{\pmarg^0}$ is defined as the infimum over such constants, we conclude that $\warmstart{\pmarg^0} \leq \warmstart{\pjoint^0}$.
\end{proof}

We now combine the relation between the $\beta$-warm start of $\pjoint^0$ and $\pmarg^0$ in \Cref{lem:warm_start_preserved}, the inequality of isoperimetric condition numbers in \Cref{cor:marginal-isocond}, and the equality of joint and marginal mixing times in \Cref{thm:MarCo_inherits_marginal} to prove our main theorem: that \MarCo achieves a smaller mixing time upper bound than \Joint.

\begin{theorem} \label{thm:mixing_M_le_J}
Suppose \Joint and \MarCo satisfy \Cref{assump:mixing_time_comparison}.
  Then, the mixing time upper bound of \MarCo is strictly smaller than the mixing time upper bound of \Joint.
  That is, for threshold $\varepsilon > 0$,
  \begin{equation}
    \label{eq:mixing_time_M_le_J_weak}
    t_{\rm mix, \MarCo}^{\uparrow}(\varepsilon)
    < t_{\rm mix, \Joint}^{\uparrow}(\varepsilon).
  \end{equation}

\end{theorem}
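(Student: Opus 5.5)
By \Cref{thm:MarCo_inherits_marginal}(b), the \MarCo mixing time on the joint space equals the mixing time of its $\bfy$-chain started at $\pmarg^0$. The natural upper bound for \MarCo is therefore the one provided by \ref{assump:hyperpar_and_monotone_mixing_time} for the marginal chain, with the hyperparameters tuned to $\pmarg$:
\[
t_{\rm mix,\MarCo}^{\uparrow}(\varepsilon) \coloneqq t_{\rm mix}^{\uparrow}\bigl(\varepsilon;\ny,\warmstart{\pmarg^0}{\pmarg},\isocond(\pmarg)\bigr).
\]
By \ref{assump:same_algorithm}, the \Joint bound comes from the same kernel family and the same tuning function, now applied to $\pjoint$:
\[
t_{\rm mix,\Joint}^{\uparrow}(\varepsilon) = t_{\rm mix}^{\uparrow}\bigl(\varepsilon;\nx+\ny,\warmstart{\pjoint^0}{\pjoint},\isocond(\pjoint)\bigr).
\]
With both bounds written as the same function, the proof reduces to comparing the three arguments and then using monotonicity.

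The three comparisons come from earlier results, applied one argument at a time.
\begin{enumerate}
\item \emph{Dimension.} Since $\nx\ge 1$, we have $\ny<\nx+\ny$.
\item \emph{Warm start.} By \ref{assump:initial_distribution}, $\pjoint^0$ is a warm start for $\pjoint$ and factorizes as $\pcond\,\pmarg^0$. \Cref{lem:warm_start_preserved} then gives $\warmstart{\pmarg^0}\le\warmstart{\pjoint^0}$, and in particular the marginal chain also starts warm.
\item \emph{Conditioning.} \Cref{assump:joint_density_regularity} holds, so \Cref{cor:marginal-isocond} gives $\isocond(\pmarg)\le\isocond(\pjoint)$.
\end{enumerate}

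For these bounds to be admissible, $\pmarg$ itself must meet the regularity conditions that the tuning map requires. Log-concavity of $\pmarg$ follows from Prékopa's theorem, finiteness of $L(\pmarg)$ from \Cref{prop:marginal-hessian}, and $\psi(\pmarg)>0$ from \Cref{prop:marginal-isoperimetry}. Twice continuous differentiability of $V_{\rm marg}$ was already used in \Cref{lem:marginal-hessian-identity}.

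Next, we chain the monotonicity properties from \ref{assump:hyperpar_and_monotone_mixing_time}:
\[
t^{\uparrow}(\varepsilon;\ny,\beta_{\rm m},\isocond_{\rm m})
\le t^{\uparrow}(\varepsilon;\ny,\beta_{\rm j},\isocond_{\rm m})
\le t^{\uparrow}(\varepsilon;\ny,\beta_{\rm j},\isocond_{\rm j})
< t^{\uparrow}(\varepsilon;\nx+\ny,\beta_{\rm j},\isocond_{\rm j}).
\]
Here the subscripts m and j denote the marginal and joint quantities. The first two steps use that the bound is non-decreasing in the warm-start parameter and in $\isocond$. The last step is strict because the bound is strictly increasing in $d$, so the final inequality is strict even when the marginal is no better conditioned than the joint. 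This gives \eqref{eq:mixing_time_M_le_J_weak}.

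The main obstacle is a bookkeeping point about which tuning the \MarCo bound refers to. The bound is only valid if the hyperparameters of the $\bfy$-chain are tuned with the marginal's own inputs $(\varepsilon,\ny,\warmstart{\pmarg^0},L(\pmarg),\psi(\pmarg))$. These are exactly the inputs that the regularity checks above confirm are available. A second point is that the bound is monotone in the tight constants $\isocond$ and $\beta$. If one prefers to plug in the joint constants instead, as looser admissible values, monotonicity still keeps that bound valid, and the same chain of inequalities applies unchanged.
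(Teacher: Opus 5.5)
Your proposal is correct and follows the paper's proof: you write both bounds as the same function $t_{\rm mix}^\uparrow$ from \ref{assump:hyperpar_and_monotone_mixing_time}, then combine $\ny<\nx+\ny$, \Cref{lem:warm_start_preserved}, and \Cref{cor:marginal-isocond} with the monotonicity assumptions, and strictness comes from the dimension. Your explicit check that $\pmarg$ itself meets the regularity the tuning map requires is a useful addition the paper leaves implicit, but you should drop your closing aside about tuning with the looser joint constants, since \ref{assump:hyperpar_and_monotone_mixing_time} only guarantees the bound for hyperparameters tuned with the target's own constants.
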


\begin{proof}
By \ref{assump:hyperpar_and_monotone_mixing_time}, %
$(\varepsilon,\nx + \ny,\warmstart{\pjoint^0}, L(\pjoint),\psi(\pjoint))$ uniquely determine the hyperparameters of the \Joint kernel, whereas $(\varepsilon,\ny,\warmstart{\pmarg^0}, L(\pmarg),\psi(\pmarg))$ uniquely determine those of the kernel for the $\bfy$-chain in \MarCo.
The same assumption guarantees that the mixing time upper bounds of \Joint and \MarCo are
  \begin{align*}
    t_{\rm mix,\Joint}^\uparrow(\varepsilon)
    &= t_{\rm mix}^\uparrow
      \left(\varepsilon; \nx+\ny,\warmstart{\pjoint^0},\isocond(\pjoint)\right) \qquad \text{and}\\
     t_{\rm mix,\,\MarCo}^\uparrow(\varepsilon)
    &= t_{\rm mix}^\uparrow
      \left(\varepsilon; \ny,\warmstart{\pmarg^0},\isocond(\pmarg)\right).
  \end{align*}
  The second equality follows from \Cref{thm:MarCo_inherits_marginal}, which shows that the full \MarCo chain inherits the mixing time upper bound for the marginal chain.
  Recall, marginalization reduces the dimensionality (i.e., $\ny < \nx+\ny$), $\isocond(\pmarg) \leq \isocond(\pjoint)$ by~\eqref{eq:marginal-isoperimetric-condition-number} in~\Cref{cor:marginal-isocond}, and $\warmstart{\pmarg^0} \leq \warmstart{\pjoint^0}$ by~\Cref{lem:warm_start_preserved}.
  Because, by assumption, $t_{\rm mix}^\uparrow$ is strictly increasing in dimension and nondecreasing in isoperimetric condition number and warm-start parameter, the \MarCo mixing time bound is strictly less than the \Joint bound.
\end{proof}

The mixing time upper bound in \Cref{assump:mixing_time_comparison} is stated in terms of the isoperimetric condition number $\isocond$, but if $\pjoint$ is strongly log-concave (hence, $\kappa$ is finite) the above results can be stated in terms of $\kappa$, using inequality~\eqref{eq:ineq_kappa_pi} instead of~\eqref{eq:marginal-isoperimetric-condition-number} in the above proof.
In fact, it is common in the log-concave sampling literature to express mixing time upper bounds in terms of $\kappa$ in the strongly log-concave cases; see, e.g., \cite[Theorem 1]{dwivedi_log-concave_2019} and \cite[Theorem 1]{wu_minimax_2022}.
A treatment of the weakly log-concave case, without assuming an isoperimetric inequality, would require a specialized approach.
To the best of the authors' knowledge, a mixing time upper bound determined by the parameters $(\varepsilon; d,\warmstart{\pi^0},L(\pi))$ has not been proven; hence \Cref{assump:mixing_time_comparison} could not be satisfied by any practical example.
Nevertheless, a strategy that one could take is to construct a strongly log-concave approximation to the density \cite[Section 4.3]{dalalyan_theoretical_2017}.
Indeed, for such a construction, mixing time upper bounds have been derived \cite[Corollary 4]{dwivedi_log-concave_2019}, and a similar strategy as employed above could be used.
\section{\MarCo Dominates \OneblockMCMC in Peskun Ordering}
\label{sec:theory_marco_vs_one_block}

In \Cref{sec:marco_vs_joint_conditioning}, we showed that marginalization can improve the regularity of the density and lead to more efficient sampling. In this section, we show that \MarCo can outperform \OneblockMCMC, which is the closest \Joint strategy to \MarCo.
The advantage of \MarCo over \OneblockMCMC can be formalized by comparing the transition kernels, $k_{\Mtt}$ in \eqref{eq:marco_transition_kernel} and $k_{\Btt}$ in \eqref{eq:oneblock_transition_kernel}, respectively.  
Furthermore, the \MarCo advantage holds for any joint distribution, not just log-concave cases. 
The comparison of \OneblockMCMC and \MarCo transition kernels is natural because of the matching marginal proposal mechanism, $q_{\rm marg}$, and MH acceptance and rejection probabilities, $\alpha_{\rm marg}$ and $r_{\rm marg}$ in \eqref{eq:marco_acceptance_probability} and  \eqref{eq:marco_transition_kernel_rejection}, respectively. 
The difference between the two algorithms lies in their behavior under rejection. 
While \OneblockMCMC rejects the whole proposal $(\bfx^\star,\bfy^\star)$ and stays at $(\bfx^t,\bfy^t)$, \MarCo only rejects $\bfy^\star$ (staying at $\bfy^t$) and moves the $\bfx$-coordinate from $\bfx^t$ to $\bfx^{t+1} \sim \pcond(\cdot \mid \bfy^t)$ (see Figure~\ref{fig:chain_comparison}).
Intuitively, the movement of the \MarCo sampler upon rejection promotes mixing and improves the speed of convergence.

We formally validate this intuition using the framework of Peskun-Tierney ordering (see \cite{peskun_optimum_1973, tierney_note_1998, mira_ordering_2001, andrieu_peskuntierney_2021, andrieu_establishing_2016}). Let $\pi(\d\bfz)\in \Pcal(\Rbb^d)$ and let $k_1(\bfz,\d\hat\bfz)$ and $k_2(\bfz,\d\hat\bfz)$ be two $\pi$-reversible Markov transition kernels. We say that $k_1$ \textit{dominates} $k_2$ \textit{in the sense of Peskun}, denoted $k_1 \succeq k_2$, if
\begin{equation*}
    k_1(\bfz,A\setminus \{\bfz\}) \ge k_2(\bfz,A\setminus \{\bfz\}),\qquad\text{ for all }\bfz\in\mathbb{R}^d \text{ and } A\in\mathcal{B}(\mathbb{R}^d).
\end{equation*}
Intuitively, this condition means that the kernel $k_1$ has a higher probability of moving away from the current state $\bfz$ than the kernel $k_2$ does. Consequently, the corresponding chain explores the state space more effectively, leading to improved mixing properties.

We now show that the \MarCo transition kernel $k_{\Mtt}$ dominates the \OneblockMCMC transition kernel $k_{\Btt}$ in the sense of Peskun ($k_{\Mtt}\succeq k_{\Btt}$). Note that both  kernels are $\pjoint$-reversible because, as MH algorithms, both satisfy detailed balance with respect to $\pjoint$.

\begin{proposition}[$k_{\Mtt} \succeq k_{\Btt}$]
\label{prop:Peskun_ordering_M_B}
    For the same marginal proposal distribution, $q_{\rm marg}$, 
    the \MarCo transition kernel, $k_{\Mtt}$, dominates the \OneblockMCMC transition kernel, $k_{\Btt}$, in the sense of Peskun.
\end{proposition}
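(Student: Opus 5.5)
The plan is to verify the Peskun condition directly from the explicit kernels \eqref{eq:marco_transition_kernel} and \eqref{eq:oneblock_transition_kernel}. Fix $\bfz=(\bfx,\bfy)$ and a Borel set $A\subseteq\bbR^\nx\times\bbR^\ny$. Write $A_{\bfz}\coloneqq A\setminus\{(\bfx,\bfy)\}$. Both kernels share the same ``acceptance part''
\begin{align*}
a(\bfz,A_{\bfz})\coloneqq\int_{\bbR^\ny}\int_{\bbR^\nx}\mathbbm{1}_{A_{\bfz}}(\hat\bfx,\hat\bfy)\,\pcond(\d\hat\bfx\mid\hat\bfy)\,q_{\rm marg}(\hat\bfy\mid\bfy)\alpha_{\rm marg}(\bfy,\hat\bfy)\,\d\hat\bfy .
\end{align*}
So I will decompose each kernel into this common term plus its rejection term and compare only the rejection terms.

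For \OneblockMCMC, the rejection term is $r_{\rm marg}(\bfy)\,\delta_{(\bfx,\bfy)}(A_{\bfz})$. This vanishes because $(\bfx,\bfy)\notin A_{\bfz}$. Hence $k_{\Btt}(\bfz,A_{\bfz})=a(\bfz,A_{\bfz})$.

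For \MarCo, the kernel \eqref{eq:marco_transition_kernel} with \eqref{eq:marco_transition_kernel_marginal} gives
\begin{align*}
k_{\Mtt}(\bfz,A_{\bfz}) = a(\bfz,A_{\bfz}) + r_{\rm marg}(\bfy)\int_{\bbR^\nx}\mathbbm{1}_{A_{\bfz}}(\hat\bfx,\bfy)\,\pcond(\d\hat\bfx\mid\bfy).
\end{align*}
The rejection term here equals $r_{\rm marg}(\bfy)\,\pcond(\{\hat\bfx:(\hat\bfx,\bfy)\in A\}\setminus\{\bfx\}\mid\bfy)$, which is nonnegative since $r_{\rm marg}\ge0$ and $\pcond$ is a probability measure. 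Therefore $k_{\Mtt}(\bfz,A_{\bfz})\ge k_{\Btt}(\bfz,A_{\bfz})$ for all $\bfz$ and $A$, which is exactly $k_{\Mtt}\succeq k_{\Btt}$.

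The only step needing care, and the one I expect to be the main (mild) obstacle, is a measure-theoretic one. I must justify splitting $k_{\Mtt}$ into the two pieces and interpreting the product $\pcond(\d\hat\bfx\mid\hat\bfy)\,r_{\rm marg}(\bfy)\delta_{\bfy}(\d\hat\bfy)$ as the measure $r_{\rm marg}(\bfy)\,\pcond(\d\hat\bfx\mid\bfy)\otimes\delta_{\bfy}$. This is standard: one integrates $\mathbbm{1}_{A_{\bfz}}$ first in $\hat\bfx$ against the kernel $\pcond(\cdot\mid\hat\bfy)$ (a measurable function of $\hat\bfy$ by regularity of the conditional), then against $k_{\rm marg}(\bfy,\d\hat\bfy)$. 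The atom at $\hat\bfy=\bfy$ then contributes the displayed term.

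Since the paper defines Peskun dominance only for reversible kernels, I will also note the reversibility of both kernels. For \OneblockMCMC this holds as an MH kernel with respect to $\pjoint$. For \MarCo it follows from the reversibility of $k_{\rm marg}$ with respect to $\pmarg$ together with \eqref{eq:joint_factorization}: indeed
\[
\pjoint(\d\bfx,\d\bfy)\,k_{\Mtt}=\pcond(\d\bfx\mid\bfy)\,\pcond(\d\hat\bfx\mid\hat\bfy)\,\pmarg(\d\bfy)\,k_{\rm marg}(\bfy,\d\hat\bfy),
\]
which is symmetric under swapping $(\bfx,\bfy)\leftrightarrow(\hat\bfx,\hat\bfy)$.
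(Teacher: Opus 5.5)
Your proof is correct and follows the paper's argument: you split both kernels evaluated on $A\setminus\{(\bfx,\bfy)\}$ into the common acceptance integral plus a rejection term, note that the \OneblockMCMC rejection term vanishes there, and note that the \MarCo rejection term $r_{\rm marg}(\bfy)\,\pcond(\{\hat\bfx:(\hat\bfx,\bfy)\in A\}\setminus\{\bfx\}\mid\bfy)$ is nonnegative. Your explicit verification that $k_{\Mtt}$ is $\pjoint$-reversible, using the factorization and the reversibility of $k_{\rm marg}$, is a worthwhile addition. The paper justifies reversibility only by calling both kernels MH algorithms, which \MarCo is not, strictly speaking, on the joint space.
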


\begin{proof}
    Following the transition kernel definitions in \eqref{eq:marco_transition_kernel} and \eqref{eq:oneblock_transition_kernel},  for any $(\bfx,\bfy)\in\mathbb{R}^{\nx+\ny}$ and any $A\in\mathcal{B}(\mathbb{R}^{\nx+\ny})$, we have
    \begin{align*}
        k_{\Mtt}((\bfx,\bfy),A\setminus\{(\bfx,\bfy)\})&=\int_A \pcond(\hat\bfx\mid\hat\bfy)q_{\rm marg}(\hat\bfy\mid\bfy)\alpha_{\rm marg}(\bfy,\hat\bfy)\d\hat\bfx \d\hat\bfy \\
        &\hspace{3cm}+ \int_A \pcond(\hat\bfx\mid\hat\bfy)r_{\rm marg}(\bfy)\d\hat\bfx\delta_{\bfy}(d\hat\bfy),\\
        k_{\Btt}((\bfx,\bfy),A\setminus\{(\bfx,\bfy)\})&=\int_A \pcond(\hat\bfx\mid\hat\bfy)q_{\rm marg}(\hat\bfy\mid\bfy)\alpha_{\rm marg}(\bfy,\hat\bfy)\d\hat\bfx \d\hat\bfy.
    \end{align*}
    The Peskun ordering follows immediately because the second term in the expression for $ k_{\Mtt}((\bfx,\bfy),A\setminus\{(\bfx,\bfy)\})$ is non-negative.
\end{proof}

A Peskun-Tierney ordering yields a comparison of the efficiency of the corresponding Markov chains, that we can express in terms of asymptotic variance \eqref{eq:asymptotic_variance} and right spectral gap \eqref{eq:spectral_gap}. In the following theorem, we formalize such a comparison between \MarCo and \OneblockMCMC.

\begin{theorem}
For the same marginal proposal distribution, $q_{\rm marg}$, 
\begin{align*}
    \operatorname{Gap}_R(k_{\Mtt})\ge\operatorname{Gap}_R(k_{\Btt}) \qquad\text{and}\qquad \operatorname{var}(f,k_{\Mtt})\le\operatorname{var}(f,k_{\Btt})
\end{align*}
for any $\pi_{\rm joint}$-square-integrable function, $f$. 
\end{theorem}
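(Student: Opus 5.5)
Our plan is to derive both inequalities from the Peskun domination $k_{\Mtt}\succeq k_{\Btt}$ of \Cref{prop:Peskun_ordering_M_B}, using the classical Peskun--Tierney theorem \cite{peskun_optimum_1973,tierney_note_1998}. That theorem says: if $k_1,k_2$ are both $\pi$-reversible and $k_1\succeq k_2$, then $\langle f,(I-k_1)f\rangle_\pi\ge\langle f,(I-k_2)f\rangle_\pi$ for every $f\in L^2(\pi)$. Both conclusions follow from this Dirichlet form comparison.

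\textbf{Step 1: reversibility.} We first confirm that both kernels are $\pjoint$-reversible. For $k_{\Btt}$ this holds because it is an MH kernel on the joint space with a proposal that admits a density. For $k_{\Mtt}$ we check detailed balance directly. Write $\pjoint(\d\bfx,\d\bfy)k_{\Mtt}=\pcond(\d\bfx\mid\bfy)\pcond(\d\hat\bfx\mid\hat\bfy)\,\pmarg(\d\bfy)k_{\rm marg}(\bfy,\d\hat\bfy)$. The factor $\pmarg(\d\bfy)k_{\rm marg}(\bfy,\d\hat\bfy)$ is symmetric because $k_{\rm marg}$ is $\pmarg$-reversible, and the product of conditionals is symmetric under swapping $(\bfx,\bfy)\leftrightarrow(\hat\bfx,\hat\bfy)$.

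\textbf{Step 2: Dirichlet form comparison.} For reversible kernels we use the identity
\[
\langle f,(I-k)f\rangle_\pi=\tfrac12\iint (f(\hat\bfz)-f(\bfz))^2\,\pi(\d\bfz)k(\bfz,\d\hat\bfz).
\]
The integrand vanishes on the diagonal $\hat\bfz=\bfz$. So only the off-diagonal part $k(\bfz,\cdot\setminus\{\bfz\})$ contributes, and the Peskun inequality gives $\mathcal E_{\Mtt}(f)\ge\mathcal E_{\Btt}(f)$.

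\textbf{Step 3: right spectral gap.} Self-adjointness gives the variational characterization
\[
\operatorname{Gap}_R(k)=\inf\{\mathcal E_k(f)/\operatorname{var}_\pi(f):\ f\in L^2(\pi),\ \operatorname{var}_\pi(f)>0\}.
\]
Step 2 then yields $\operatorname{Gap}_R(k_{\Mtt})\ge\operatorname{Gap}_R(k_{\Btt})$ immediately.

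\textbf{Step 4: asymptotic variance (main obstacle).} This step needs more than a comparison of quadratic forms. We would invoke Tierney's theorem \cite{tierney_note_1998}, extended to general state spaces and unbounded $f\in L^2$ in \cite{mira_ordering_2001,andrieu_peskuntierney_2021}. Its argument writes, for $f$ centered,
\[
\operatorname{var}(f,k)=\lim_{\lambda\uparrow1}\bigl(2\langle f,(I-\lambda k)^{-1}f\rangle_\pi-\langle f,f\rangle_\pi\bigr).
\]
It then uses that $\mathcal E_{k_1}\ge\mathcal E_{k_2}$ implies $I-\lambda k_1\succeq I-\lambda k_2$ as positive operators, so operator monotonicity of inversion gives $(I-\lambda k_1)^{-1}\preceq(I-\lambda k_2)^{-1}$. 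Passing to the limit by monotone convergence gives the inequality, including the case where $\operatorname{var}(f,k_{\Btt})=\infty$. Non-centered $f$ reduce to centered ones by subtracting the mean.

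The delicate points are justifying the limit and the spectral representation of the asymptotic variance. We would cite these rather than reprove them. Since both kernels share the invariant measure $\pjoint$, the cited theorem applies verbatim once Steps 1 and 2 are in place.
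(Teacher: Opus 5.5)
Your proposal is correct and follows the same route as the paper, whose proof is just \Cref{prop:Peskun_ordering_M_B} combined with \cite[Theorem 2]{andrieu_establishing_2016}; your Steps 2--4 (Peskun ordering $\Rightarrow$ Dirichlet-form ordering $\Rightarrow$ ordering of $\operatorname{Gap}_R$ via the variational formula, and of $\operatorname{var}(f,\cdot)$ via Tierney's resolvent argument) spell out what that cited result contains. Your explicit detailed-balance check for $k_{\Mtt}$ in Step 1 is a welcome addition: the paper justifies $\pjoint$-reversibility of $k_{\Mtt}$ only by calling \MarCo an MH algorithm, which it is not literally on the joint space, since it resamples $\bfx$ upon rejection.
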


\begin{proof}
    The claim follows directly from~\Cref{prop:Peskun_ordering_M_B} and~\cite[Theorem 2]{andrieu_establishing_2016}.
\end{proof}

As discussed in \Cref{sec:convergence_markov_chains}, larger right spectral gap and smaller asymptotic variance of \MarCo are associated with superior MCMC convergence properties compared to \OneblockMCMC in terms of convergence speed and Monte Carlo error.

\section{Numerical Examples}
\label{sec:numerics}

We now turn from theoretical analysis to numerical examples.
The first numerical example is described in \Cref{sec:deblurring}, where we investigate the practicality of \MarCo for a realistic semi-blind Bayesian image-deblurring problem.
For this example, the unknowns include both a high-dimensional image and a small number of parameters that define the blurring function.
Then, in \Cref{sec:funnel}, we explore a canonical Neal's funnel model to systematically test how \MarCo's performance scales with dimension and geometric difficulty of the target distribution.
We note that while neither experiment satisfies  strong log-concavity or isoperimetric inequality assumptions, empirical mixing time behaviors of such settings have been studied extensively in the literature \cite{dwivedi_log-concave_2019,wu_minimax_2022}.
Both examples demonstrate the improved MCMC convergence behavior of \MarCo compared to \Joint and \OneblockMCMC. 

We begin with a brief description of the diagnostics and implementation details.
In the experiments below, we report split-$\hat R$, effective sample size (ESS) and ESS per second, and Monte Carlo standard error (MCSE) when comparing posterior means. Together they distinguish evidence of stationarity, sampling efficiency after stationarity, and the resulting precision of posterior-mean estimates. We use split-$\hat R$ to assess whether multiple chains started from dispersed initializations have reached the same stationary regime, thereby providing a means to detect non-stationarity in the individual chains. A value of split-$\hat R$ closer to $1$ is better, as it indicates that the chains are exploring the same distribution (i.e., have mixed). To assess sampling efficiency, we provide an estimate of the ESS, which is the number of independent target draws that would yield the same asymptotic variance as the correlated MCMC draws. Thus, larger values are better. For computational comparisons, we present ESS per second.
Lastly, we consider the MCSE, which provides information about the uncertainty or error in resulting estimates. Smaller MCSE values are better. See \cite{carpenter_stan_2017,craiu_handbook_2026,gelman_bayesian_2013,vehtari_rank-normalization_2021} for formal definitions and further details.

All numerical experiments were implemented in Python using JAX \cite{bradbury_jax_2018} for array computations, automatic differentiation, just-in-time compilation, and explicit pseudorandom-number management.
MCMC transition kernels were built with BlackJAX, a composable Bayesian inference library for JAX \cite{cabezas_blackjax_2024}.
Code for the numerical experiments is available on GitHub at the repository \url{https://github.com/abhijit-c/marco}.

To provide fair comparisons of \MarCo with \Joint and \OneblockMCMC, our implementation leverages the deterministic pseudorandom number generating (PRNG) key of the JAX library~\cite{bradbury_jax_2018}.
Specifically, the proposal mechanisms for all MCMC algorithms are provided with the same PRNG keys for a total of $(T+1)M$ keys where $T + 1$ is the number of steps per chain and $M$ is the number of chains.
As a result, the $\bfy$-chains of \MarCo and \OneblockMCMC are identical because the proposal mechanism and acceptance probabilities are identical (see~\Cref{sec:marco_comparisons}).
However, contrary to \OneblockMCMC, \MarCo is designed to exploit parallelization in the sampling from the conditional distribution (see lines 9-10 in \Cref{alg:MarCo}).

\subsection{Example 1: Bayesian Semi-Blind Image Deblurring}
\label{sec:deblurring}

In semi-blind Bayesian image deblurring, we seek to infer a sharp high-dimensional image $\bfx\in\bbR^{\nx}$ from an observed blurred image $\bfd \in \bbR^{\nx}$, when only partial knowledge about the blur point spread function (PSF) is known, e.g., the PSF is defined by blur parameters $\bfy \in \bbR^{n_y}$ \cite{hansen_deblurring_2006}. 
For nonzero $\sigma_{e}$, assume that
\begin{equation}
\label{eq:semiblind_assumptions1}
\bfd = \bfB(\bfy)\bfx + \bfe, \quad \bfe\sim\calN(\bfzero,\sigma_{e}^{2}\bfI)
\end{equation}
where $\bfB(\bfy)$ belongs to a parametric class of circulant Gaussian blur operators with unnormalized kernel given by
$\exp \bigl(-\thf\bfr\t\bfSigma_b^{-1}\bfr\bigr)$ with $\bfSigma_b = \begin{psmallmatrix} s_1^2 & \rho s_1 s_2 \\ \rho s_1 s_2 & s_2^2 \end{psmallmatrix}$, $s_1,s_2>0$ and correlation $\rho\in(-1,1)$ for pixel offset $\bfr=[r_1, r_2]\t\in\bbZ^2$. We reparametrize, giving the blur parameters,
\begin{equation}
  \label{eq:deblur-theta}
  \bfy
  :=
  \begin{psmallmatrix}\log s_1 \\ \log s_2 \\ \tanh^{-1}\rho\end{psmallmatrix}
  \in\bbR^3,
  \qquad
  s_i = e^{\theta_i},\quad \rho = \tanh\theta_3,
\end{equation}
with Gaussian hyperprior $\bfy\sim \calN(\bfmu_y, \bfSigma_y)$.
For nonzero constant $\sigma_x$ and vector $\bfmu_x$, we assume an intrinsic Gaussian Markov random field prior with probability density function,
\begin{equation}
\label{eq:semiblind_assumptions2}
\pi(\bfx)\propto \exp\left(-\frac{1}{2 \sigma_x^2}(\bfx - \bfmu_x)\t \bfS\t \bfS (\bfx-\bfmu_x)\right),
\end{equation}
where $\bfS\t \bfS\in\bbR^{\nx\times\nx}$ is a convolution operator representing a 2D discrete Laplacian with periodic boundary conditions \cite{bardsley2018computational}.

The joint log-posterior density has the form
$\pjoint(\bfx, \bfy) = \exp(-V_{\rm joint}(\bfx, \bfy))$ where the joint potential is
\begin{equation}
  \label{eq:deblur-posterior}
 V_{\rm joint}(\bfx,\bfy)  =
 \thf \sigma_{e}^{-2} \| \bfB(\bfy)\bfx - \bfd\|^2 +  \thf \sigma_x^{-2} \| \bfS(\bfx - \bfmu_x)\|^2  + \thf \bigl\|\bfy - \bfmu_y \bigr\|_{\bfSigma_y^{-1}}^2 + c,
\end{equation}
where $c$ is a constant.
The conditional $\pcond(\bfx \mid \bfy, \bfd)$ is a high-dimensional Gaussian, and
the marginal is given by
\begin{equation}
\label{eq:deblur_marg}
\pmarg(\bfy\mid\bfd)  \mathrel{\propto}
\det\!\bigl(\bfH(\bfy)\bigr)^{-1/2}
    \exp\!\left(
    \thf\bfb(\bfy)\t\bfH(\bfy)^{-1}\bfb(\bfy)
    - \thf \bigl\|\bfy - \bfmu_y \bigr\|_{\bfSigma_y^{-1}}^2\right),
\end{equation}
where
$\bfH(\bfy) = \sigma_e^{-2}
    \bfB(\bfy)\t\bfB(\bfy)
    +\sigma_x^{-2}\bfS\t\bfS$ and $    \bfb(\bfy)=\sigma_e^{-2}\bfB(\bfy)\t\bfd+\sigma_x^{-2} \bfS\t \bfS \bfmu_x$.
Both $\bfB(\bfy)$ and $\bfS\t \bfS$ can be diagonalized using the discrete Fourier transform \cite{hansen_deblurring_2006}, so obtaining exact samples from the conditional and evaluating an unnormalized marginal density can be done efficiently, see \Cref{app:deblur-spectral}.
This is an ideal scenario for \MarCo to sample efficiently from $\pjoint$.%

We run 8 chains of 4000 warmup and 4000 post-warmup draws on a $128\times128$ pixel image ($\nx = 16{,}384$) with true blur parameters $s_1^\star = 1.2$, $s_2^\star = 2.4$, $\rho^\star = 0.35$ and noise level $\sigma_{e} = 0.01$.
We impose $\bfmu_y = (\log 1.8,\log 1.8,0)$ and $ \bfSigma_y = \operatorname{diag}(0.35^2,0.35^2,0.45^2)$ along with $\bfmu_x = \bfzero$ and $\sigma_x = 0.0625$.
We compare \Joint (MALA on $\pjoint$), \OneblockMCMC, and \MarCo (MALA on $\pmarg$ followed by exact conditional draws $\pcond$).
Convergence diagnostics for each of the blur parameters are provided in \Cref{tab:deblur-joint-vs-marginal} for \Joint and \MarCo.  ESS/s uses only the time spent sampling the corresponding joint or marginal chain, excluding conditional image reconstruction, thus isolating the effect of marginalizing the image on the blur-parameter chain.  We observe that for all blur parameters, the \MarCo values are better than \Joint (i.e.,  $\hat R$ closer to $1$ and larger $\ESS$ and ESS/s values).
The violin summaries in \Cref{fig:deblurring-ess-mcse-violin} show that \OneblockMCMC substantially improves pixel-wise ESS and posterior-mean MCSE over \Joint, while \MarCo produces ESS near the maximum of $32{,}000$ retained draws and the smallest MCSE.
They make the orders-of-magnitude separation between the samplers explicit while retaining the distribution across all pixels.

\begingroup
\pgfplotstableread[col sep=comma]
  {figures/deblurring/blur_parameter_metrics.csv}\DeblurringParameterMetrics
\pgfplotstableread[col sep=comma]
  {figures/deblurring/sampler_timings.csv}\DeblurringSamplerTimings
\newcommand{\DeblurringRhat}[1]{%
  \pgfplotstablegetelem{#1}{rhat}\of\DeblurringParameterMetrics
  \pgfmathprintnumber[fixed,precision=3]{\pgfplotsretval}%
}
\newcommand{\DeblurringESS}[1]{%
  \pgfplotstablegetelem{#1}{ess}\of\DeblurringParameterMetrics
  \pgfmathprintnumber[fixed,precision=0,1000 sep={,}]{\pgfplotsretval}%
}
\newcommand{\DeblurringESSRate}[2]{%
  \pgfplotstablegetelem{#1}{ess}\of\DeblurringParameterMetrics
  \edef\DeblurringESSValue{\pgfplotsretval}%
  \pgfplotstablegetelem{#2}{sampling_seconds}\of\DeblurringSamplerTimings
  \edef\DeblurringTimeValue{\pgfplotsretval}%
  \pgfmathparse{\DeblurringESSValue/\DeblurringTimeValue}%
  \pgfmathprintnumber[fixed,precision=2]{\pgfmathresult}%
}
\begin{table}[t]
  \centering
  \caption{Blur-parameter convergence diagnostics for \Joint and the \MarCo marginal chain. }
  \label{tab:deblur-joint-vs-marginal}
  \small
  \setlength{\tabcolsep}{5pt}
  \begin{tabular}{@{}l r r r r r r@{}}
    \toprule
    & \multicolumn{3}{c}{\Joint}
    & \multicolumn{3}{c}{\MarCo} \\
    \cmidrule(lr){2-4}\cmidrule(lr){5-7}
    & $\hat R$ & $\ESS$ & ESS/s
    & $\hat R$ & $\ESS$ & ESS/s \\
    \midrule
    $\log s_1$
      & \DeblurringRhat{0}$^\dagger$ & \DeblurringESS{0} & \DeblurringESSRate{0}{0}
      & \DeblurringRhat{3} & \DeblurringESS{3} & \DeblurringESSRate{3}{1} \\
    $\log s_2$
      & \DeblurringRhat{1}$^\dagger$ & \DeblurringESS{1} & \DeblurringESSRate{1}{0}
      & \DeblurringRhat{4} & \DeblurringESS{4} & \DeblurringESSRate{4}{1} \\
    $\tanh^{-1}\rho$
      & \DeblurringRhat{2}$^\dagger$ & \DeblurringESS{2} & \DeblurringESSRate{2}{0}
      & \DeblurringRhat{5} & \DeblurringESS{5} & \DeblurringESSRate{5}{1} \\
    \bottomrule
  \end{tabular}
  \par\footnotesize\raggedright
  $^\dagger$ indicates $\hat R>1.01$.
\end{table}

\newcommand{\DeblurringTiming}[2]{%
  \pgfplotstablegetelem{#1}{#2}\of\DeblurringSamplerTimings
  \pgfmathprintnumber[fixed,precision=1]{\pgfplotsretval}%
}

\begin{figure}[t]
  \centering
  \includegraphics[width=\linewidth]{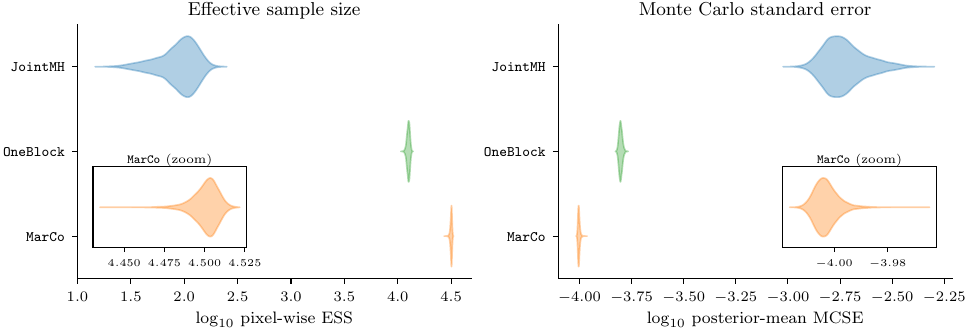}
  \caption{Distributions of pixel-wise ESS and
    posterior-mean MCSE across all
    $16{,}384$ pixels. Violins are computed in $\log_{10}$; insets
    magnify the tightly concentrated \MarCo distributions.}
  \label{fig:deblurring-ess-mcse-violin}
\end{figure}

Taken together, the blur-parameter and image diagnostics demonstrate that \OneblockMCMC and \MarCo both outperform \Joint for sampling from the joint posterior. Moreover,
compared to \OneblockMCMC, \MarCo achieves approximately $2.5\times$ the pixel-wise ESS and $3.5\times$ the pixel-wise ESS/s speedups for sampling from the conditional. Finally, we provide a comparison of CPU times (in seconds) for post-warmup sampling. Compared to \Joint which took 43.2 seconds, \MarCo took 37.8 (=20.9+16.9) seconds and \OneblockMCMC took 52.5 (=21+31.5) seconds, where the first number denotes the time for sampling from the marginal and the second number denotes the time for sampling from the conditional. It is worth noting that the additional time comes from the conditional sampling (31.5 seconds for \OneblockMCMC versus 16.9 seconds with \MarCo). We note that with higher parallelism, \MarCo can perform even better than the stated ESS/s and runtime.
\endgroup
\subsection{Example 2: Neal's Funnel}
\label{sec:funnel}

Sampling from Neal's funnel~\cite{neal_slice_2003}, a distribution that arises in Bayesian inference (e.g., ~\cite[Section 4.2]{zhang_transport_2025}), is challenging due to the exponentially-varying geometry.
Formally, we consider a hierarchical model where the marginal variable $y\in \Rbb$ is sampled from a univariate Gaussian and, conditional on $y$, the variable $\bfx \in \Rbb^\nx$ is sampled from a multivariate Gaussian whose variance depends exponentially on $y$; i.e.,
\begin{equation}
  \label{eq:funnel-hierarchy}
  y \sim \calN(0,\sigma^2)
  \quad \text{and}\quad
  \bfx \mid y \sim \calN(\bfzero, e^y\bfI_\nx)
\end{equation}
where $\sigma > 0$ is the standard deviation of the marginal distribution.
The exponential scale on the variance of the conditional variable creates a rapid narrowing of the density along the $y$-axis; visually, the density looks like a funnel (see \Cref{fig:devils_funnel}).
Because the marginal and conditional densities are Gaussian in \eqref{eq:funnel-hierarchy}, we have $\pjoint(\bfx, y) = \exp(-V_{\rm joint}(\bfx, y))$ where the joint potential is
\begin{equation}\label{eq:joint_pontential_funnel}
  V_{\rm joint}(\bfx,y)
  =
  \tfrac{1}{2} \sigma^{-2}y^2
  -\tfrac{1}{2} \nx y
  +\tfrac{1}{2} e^{-y}\|\bfx\|^2 - \log(\sigma\sqrt{(2\pi)^{\nx+1}})
\end{equation}
and the marginal potential \eqref{def:Jmarg} is $V_{\rm marg}(y) = \tfrac{1}{2} \sigma^{-2}y^2 - \log(\sigma\sqrt{2\pi})$.
From the Hessian
\begin{equation}
  \label{eq:funnel-hessian}
  \nabla^2 V_{\rm joint}(\bfx,y)
  =
  \begin{bmatrix}
    e^{-y}\bfI_\nx & -e^{-y}\bfx \\
    -e^{-y}\bfx\t & \sigma^{-2}
      + \frac{1}{2}e^{-y}\|\bfx\|^2
  \end{bmatrix},
\end{equation}
one can verify that the condition number $\kappa(\pjoint)$ depends exponentially on $y$.
For example, evaluating the Hessian at $\bfx = {\bf0}_{\nx}$ results in diagonal matrix, $\nabla V_{\rm joint}({\bf0}_{\nx}, y)$, with diagonal entries equal to $e^{-y}$ or $\sigma^{-2}$.
Thus, the condition number of $\pjoint$ must be at least $\max \{\sigma^2 e^{-y}, \sigma^{-2} e^y\}$.
Notice that as $y \to \pm \infty$, the condition number grows to $\infty$, indicating that $\pjoint$ is a weakly log concave distribution.
In comparison, because $\pmarg$ is a univariate Gaussian distribution, it is strongly log concave and $\kappa(\pmarg) = 1$.
Thus, one would expect MCMC algorithms for $\pmarg$ to mix faster than MCMC for $\pjoint$.
\def\h{0.5}

\begin{figure}[!htb]
\centering

\begin{subfigure}{\linewidth}
\centering
\includegraphics[width=\linewidth]{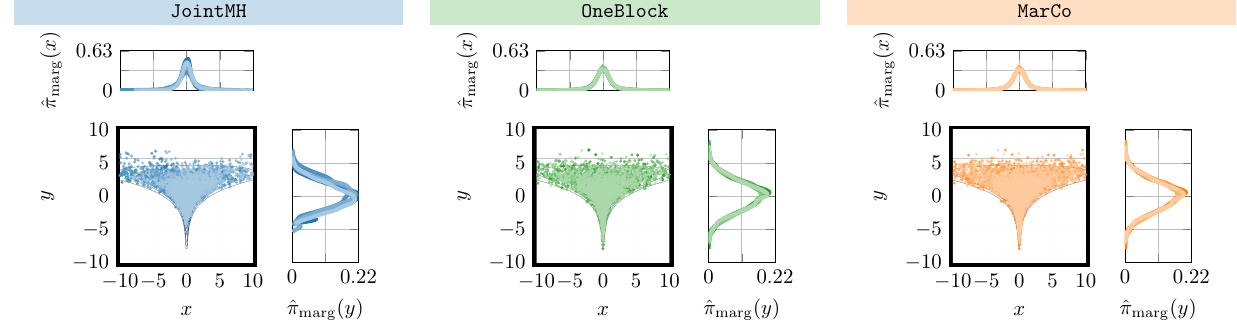}
\caption{Visualizations of samples drawn from 2D Neal's funnel from $4$ of the $8$ Markov chains.
For each algorithm, the central 2D plot shows the samples generated overlaying a contour plot of the 2D Neal's funnel.
Empirical marginal distributions for each chain appear above, $\hat{\pi}_{\rm marg}(x)$, and to the right, $\hat{\pi}_{\rm marg}(y)$, of the central plot.}
\label{fig:devils_funnel_samples}
\end{subfigure}

\begin{subfigure}{\linewidth}
\centering

\def\resultsDir{figures/toy_examples/devils_funnel}
\pgfplotstableread[col sep=comma]{\resultsDir/joint/metrics.csv}\loadedtable
\pgfplotstableread[col sep=comma]{\resultsDir/oneblock/metrics.csv}\tableoneblock
\pgfplotstablevertcat{\loadedtable}{\tableoneblock}
\pgfplotstableread[col sep=comma]{\resultsDir/marco/metrics.csv}\tablemarco
\pgfplotstablevertcat{\loadedtable}{\tablemarco}

\pgfplotstabletypeset[
	metricstablestyle,
	columns={samplername, x_rhat, y_rhat, x_ess, y_ess, x_ess_per_sec, y_ess_per_sec},
	columns/x_ess/.style={column type={@{\hspace{\h cm}}r}, fixed, precision=0, zerofill, column name={\multicolumn{1}{c}{$x$}}},
	columns/y_ess/.style={column type={r}, fixed, precision=0, zerofill, column name={\multicolumn{1}{c}{$y$}}},
	columns/x_ess_per_sec/.style={column type={@{\hspace{\h cm}}r}, fixed, precision=2, zerofill, column name={\multicolumn{1}{c}{$x$}}},
	columns/y_ess_per_sec/.style={column type={r}, fixed, precision=2, zerofill, column name={\multicolumn{1}{c}{$y$}}},
	columns/x_rhat/.style={column type={@{\hspace{\h cm}}r}, fixed, precision=4, zerofill, column name={\multicolumn{1}{c}{$x$}}},
	columns/y_rhat/.style={column type={r}, fixed, precision=4, zerofill, column name={\multicolumn{1}{c}{$y$}}},
    	every head row/.style={output empty row},
	every head row/.style={
        before row={%
            \toprule
            &
            \multicolumn{2}{c}{split-$\hat{R}$ ($\approx 1$)}
            &
            \multicolumn{2}{c}{ESS ($\uparrow$)} & \multicolumn{2}{c}{ESS/s ($\uparrow$)} \\
            \cmidrule(lr){2-3}  \cmidrule(lr){4-5} \cmidrule(lr){6-7}
        },
        after row=\midrule, %
    },
        every row 2 column 1/.style={postproc cell content/.append style={/pgfplots/table/@cell content/.add={\cellcolor{lightgray!50} \boldmath}{},}},
        every row 2 column 2/.style={postproc cell content/.append style={/pgfplots/table/@cell content/.add={\cellcolor{lightgray!50} \boldmath}{},}},
        every row 2 column 3/.style={postproc cell content/.append style={/pgfplots/table/@cell content/.add={\cellcolor{lightgray!50} \boldmath}{},}},
        every row 2 column 4/.style={postproc cell content/.append style={/pgfplots/table/@cell content/.add={\cellcolor{lightgray!50} \boldmath}{},}},
        every row 2 column 5/.style={postproc cell content/.append style={/pgfplots/table/@cell content/.add={\cellcolor{lightgray!50} \boldmath}{},}},
        every row 2 column 6/.style={postproc cell content/.append style={/pgfplots/table/@cell content/.add={\cellcolor{lightgray!50} \boldmath}{},}},
        every row 0 column 1/.style={postproc cell content/.append style={/pgfplots/table/@cell content/.add={}{}${}^\dagger$,}},
        every row 0 column 2/.style={postproc cell content/.append style={/pgfplots/table/@cell content/.add={}{}${}^\dagger$,}},
        every row 0 column 3/.style={postproc cell content/.append style={/pgfplots/table/@cell content/.add={}{}${}^\dagger$,}},
        every row 0 column 4/.style={postproc cell content/.append style={/pgfplots/table/@cell content/.add={}{}${}^\dagger$,}},
        every row 0 column 5/.style={postproc cell content/.append style={/pgfplots/table/@cell content/.add={}{}${}^\dagger$,}},
        every row 0 column 6/.style={postproc cell content/.append style={/pgfplots/table/@cell content/.add={}{}${}^\dagger$,}},
	]\loadedtable

\caption{Mixing diagnostics of the three algorithms.
By design, \OneblockMCMC and \MarCo have identical ESS for the $y$-chain.
\MarCo mixes faster in the $x$-chain (\Cref{sec:theory_marco_vs_one_block}) and parallelizes the draws from the $x$-chain, resulting in the highest ESS/s. 
For \Joint, $\dagger$ denotes the trial failed to satisfy $\text{split-$\hat R$} < 1.01$. Hence, the ESS values should be interpreted with caution.}
\label{tab:devils_funnel_metrics}
\end{subfigure}

\caption{2D Neal's funnel demonstration using $\sigma = 3$ for the marginal standard deviation.}
\label{fig:devils_funnel}
\end{figure}

We compare \Joint, \OneblockMCMC, and \MarCo on Neal's funnel examples of various dimensions of the joint, $d$, and standard deviations $\sigma$.
For all algorithms, we draw samples using {\tt MALA}  with $8$ chains and generate $4000$ samples per chain.
In order to properly tune the step size (\Cref{sec:spectral_consequences}), we warm-up for $4000$ steps with an initial step size of $10^{-2}$ and a target acceptance rate of $0.574$.
We tune the step size using dual-averaging adaptation \cite[Section 3.2.1]{hoffman_no-u-turn_2014}.
Before warm up, each chain is initialized randomly and the same initial $y$-states are used for all three algorithms.
We run each algorithm over $5$ random initializations.

The benefits of mixing in the low-dimensional, well-conditioned marginal space are verified numerically in \Cref{fig:devils_funnel}.
As expected, \Joint exhibits the weakest diagnostics in \Cref{tab:devils_funnel_metrics}, including split-$\hat{R}$ values greater than BlackJAX-suggested threshold of $1.01$, indicating a lack of convergence.
In comparison, the split-$\hat R$ values of the marginal MCMC algorithms, \OneblockMCMC and \MarCo, indicate substantially better agreement across chains.
Moreover, the ESS and ESS per second values of \OneblockMCMC and \MarCo are two orders of magnitude greater than those of \Joint, supporting the theoretical results in \Cref{sec:spectral_consequences} that MCMC algorithms applied to the lower-dimensional, better-conditioned marginal exhibit faster mixing times. 
Comparing the marginalized methods, we see that the \MarCo achieves about twice as large an $\bfx$-chain ESS as \OneblockMCMC, reflecting the benefits of moving upon rejection. 
We see the additional benefits of parallelized sampling of the $\bfx$-chain in the ESS/s metric, where \MarCo achieves a 3-4 times speed up. 

We further explore the effects of the geometry of Neal's funnel on MCMC performance, varying both dimension and marginal standard deviation
The diagnostics in \Cref{tab:funnel_scaling_min_ess_x} reveal that \Joint becomes fragile as either source of funnel difficulty increases (higher dimension or greater marginal variance).
For the same budget across algorithms, \Joint failed to achieve satisfactory cross-chain mixing in the more difficult funnel configurations.
In contrast, both \MarCo and \OneblockMCMC samplers sustain stronger mixing metrics as the dimension and funnel scale increase by marginalizing.  
Across all parameters, \MarCo\ consistently performs best, consistent with the theoretical insights in \Cref{sec:spectral_consequences} and \Cref{sec:theory_marco_vs_one_block}. 

\begin{table}[!htb]
\centering
\scriptsize

\pgfplotstableread[col sep=comma]
{figures/funnel_scaling/generated/min_ess_x_sigma_1.csv}\loadedtablefunnel
\pgfplotstableread[col sep=comma]
{figures/funnel_scaling/generated/min_ess_x_sigma_2.csv}\funnelsigmatwo
\pgfplotstableread[col sep=comma]
{figures/funnel_scaling/generated/min_ess_x_sigma_3.csv}\funnelsigmathree

\pgfplotstablecreatecol[copy column from table={\funnelsigmatwo}{d4}]{d4_2}{\loadedtablefunnel}
\pgfplotstablecreatecol[copy column from table={\funnelsigmatwo}{d16}]{d16_2}{\loadedtablefunnel}
\pgfplotstablecreatecol[copy column from table={\funnelsigmatwo}{d64}]{d64_2}{\loadedtablefunnel}
\pgfplotstablecreatecol[copy column from table={\funnelsigmatwo}{d256}]{d256_2}{\loadedtablefunnel}

\pgfplotstablecreatecol[copy column from table={\funnelsigmathree}{d4}]{d4_3}{\loadedtablefunnel}
\pgfplotstablecreatecol[copy column from table={\funnelsigmathree}{d16}]{d16_3}{\loadedtablefunnel}
\pgfplotstablecreatecol[copy column from table={\funnelsigmathree}{d64}]{d64_3}{\loadedtablefunnel}
\pgfplotstablecreatecol[copy column from table={\funnelsigmathree}{d256}]{d256_3}{\loadedtablefunnel}

\pgfplotstableset{%
    highlightrow/.style={
        postproc cell content/.append code={
            \count1=\pgfplotstablerow%
            \advance\count1 by1%
            \ifnum\count1=#1%
                \ifnum\pgfplotstablecol>0\relax
                    \pgfkeysalso{@cell content/.add={\ifmmode\else\cellcolor{lightgray!50} \boldmath\fi$}{$}}%
                \fi%
            \fi%
        },
    },
}

{
\setlength{\tabcolsep}{1.5pt}
\def\h{0.15}
\pgfplotstabletypeset[
	metricstablestyle,
	columns={samplername, 
        d4, d16, d64, d256, 
        d4_2, d16_2, d64_2, d256_2, 
        d4_3, d16_3, d64_3, d256_3},
	columns/d4/.style={column type={r}, fixed, precision=0, zerofill, column type={@{\hspace{\h cm}}r}, column name={\multicolumn{1}{c}{$d=4$}}},
	columns/d16/.style={column type={r}, fixed, precision=0, zerofill, column type={r},, column name={\multicolumn{1}{c}{$16$}}},
	columns/d64/.style={column type={r}, fixed, precision=0, zerofill, column type={r}, column name={\multicolumn{1}{c}{$64$}}},
	columns/d256/.style={column type={r}, fixed, precision=0, zerofill, column type={r}, column name={\multicolumn{1}{c}{$256$}}},
	columns/d4_2/.style={column type={@{\hspace{\h cm}}r}, fixed, precision=0, zerofill, column name={\multicolumn{1}{c}{$d=4$}}},
	columns/d16_2/.style={column type={r}, fixed, precision=0, zerofill, column type={r}, column name={\multicolumn{1}{c}{$16$}}},
	columns/d64_2/.style={column type={r}, fixed, precision=0, zerofill, column type={r}, column name={\multicolumn{1}{c}{$64$}}},
	columns/d256_2/.style={column type={r}, fixed, precision=0, zerofill, column type={r}, column name={\multicolumn{1}{c}{$256$}}},
	columns/d4_3/.style={column type={@{\hspace*{\h cm}}r}, fixed, precision=0, zerofill, column name={\multicolumn{1}{c}{$d=4$}}},
	columns/d16_3/.style={column type={r}, fixed, precision=0, zerofill, column type={r},, column name={\multicolumn{1}{c}{$16$}}},
	columns/d64_3/.style={column type={r}, fixed, precision=0, zerofill, column type={r}, column name={\multicolumn{1}{c}{$64$}}},
	columns/d256_3/.style={column type={r}, fixed, precision=0, zerofill, column type={r}, column name={\multicolumn{1}{c}{$256$}}},
    every head row/.style={output empty row},
	every head row/.style={
        before row={%
            \toprule
            &
            \multicolumn{4}{c}{$\sigma=1$}
            & 
            \multicolumn{4}{c}{$\sigma=2$}
            &
            \multicolumn{4}{c}{$\sigma=3$} \\
            \cmidrule(lr){2-5}  \cmidrule(lr){6-9} \cmidrule(lr){10-13}
        },
        after row=\midrule, %
    },
        highlightrow = {2},
        every row 0 column 3/.style={postproc cell content/.append style={/pgfplots/table/@cell content/.add={}{}${}^\dagger$,}},
        every row 0 column 4/.style={postproc cell content/.append style={/pgfplots/table/@cell content/.add={}{}${}^\dagger$,}},
        every row 0 column 5/.style={postproc cell content/.append style={/pgfplots/table/@cell content/.add={}{}${}^\dagger$,}},
        every row 0 column 6/.style={postproc cell content/.append style={/pgfplots/table/@cell content/.add={}{}${}^\dagger$,}},
        every row 0 column 7/.style={postproc cell content/.append style={/pgfplots/table/@cell content/.add={}{}${}^\dagger$,}},
        every row 0 column 8/.style={postproc cell content/.append style={/pgfplots/table/@cell content/.add={}{}${}^\dagger$,}},
        every row 0 column 9/.style={postproc cell content/.append style={/pgfplots/table/@cell content/.add={}{}${}^\dagger$,}},
        every row 0 column 10/.style={postproc cell content/.append style={/pgfplots/table/@cell content/.add={}{}${}^\dagger$,}},
        every row 0 column 11/.style={postproc cell content/.append style={/pgfplots/table/@cell content/.add={}{}${}^\dagger$,}},
	]\loadedtablefunnel
}

\caption{
    Minimum $\bfx$-ESS in the Neal's funnel scaling study, averaged over five random trials.
    Column blocks fix the funnel scale $\sigma$ and report conditional dimensions $d$ within each block.
    \MarCo obtains the highest ESS values, approximately twice as large as the ESS of \OneblockMCMC and 2-3 orders of magnitude larger than \Joint.
    For \Joint, $\dagger$ denotes that at least one of trials failed to satisfy $\text{split-$\hat R$} < 1.01$. Hence, ESS values should be interpreted with caution.
}
\label{tab:funnel_scaling_min_ess_x}
\end{table}

\section{Conclusions and Future Work}
\label{sec:conclusions}

We introduced \MarCo, a marginal-conditional strategy for sampling from a joint distribution.
\MarCo inherits the convergence properties of the MH sampler for the marginal and converges to the target distribution in total-variation. Due to sampling in a lower-dimensional and potentially better conditioned space, \MarCo has sharper mixing time upper bounds than \Joint.
Moreover, \MarCo dominates \OneblockMCMC in the Peskun ordering, yielding a no-smaller right spectral gap and no-larger asymptotic variance. The theoretical results demonstrate \MarCo's superior convergence properties. The numerical experiments support the theoretical results and demonstrate that, in practice, \MarCo performs significantly better than existing sampling methods.

An important direction for future work is to extend \MarCo to large-scale problems, where exact sampling from the conditional may not be feasible. Marginal potentials often require costly linear solvers, trace, or log-determinant evaluations; randomized low-rank and matrix-free estimators offer a promising route to reducing these costs \cite{saibaba_randomized_2017}. 
Combining such approximations with delayed acceptance or pseudomarginal corrections could extend \MarCo to settings where the marginal cannot be evaluated exactly while retaining the correct target distribution \cite{saibaba_efficient_2019}. 
Additional ideas from iterative sampling may be able to gradually improve the accuracy of the inexact conditional draws \cite{newman_slimtrain---stochastic_2022, chung_sampled_2020, slagel_sampled_2019}. 
On the theoretical side, sharper guarantees under weaker geometric assumptions, as well as bounds that account for randomized or inexact marginal evaluations, would 
extend the impact and practicality of \MarCo.
  
\section*{Acknowledgments}
The authors E. Newman and A. Chowdhary acknowledge support from the United States Air Force Office of Scientific Research under FA9550-26-1-B049 (program manager Dr. Fariba Fahroo).
This work was partially supported by the National Science Foundation under grants DMS-2411197 (J. Chung), DMS-2309751 (E. Newman), DMS-2541280 (E. Newman), and by the Knut and Alice Wallenberg Foundation under grant KAW 2024.0327 (F. Milinanni).
Moreover, this material is based upon work supported by the National Science Foundation under Grant No.~DMS-1929284 while the four authors were in residence at the Institute for Computational and Experimental Research in Mathematics in Providence, RI, during the Stochastic and Randomized Algorithms in Scientific Computing program.
The numerical experiments were performed through the hardware made available in the Tufts University High Performance Compute Cluster \cite{phimmasen_reference_2021}.

The authors also acknowledge the use of GPT-5.5, GPT-5.6 Sol, Gemini Pro 3.1, and Claude Sonnet 5 in writing the experiments, primarily for debugging and SLURM orchestration.
The same tools were also consulted during the general research and development of the theory presented in this manuscript, and for polishing the writing.
The authors take full responsibility for the accuracy and integrity of all content in this paper.

\bibliographystyle{siamplain}

\appendix
\section{Marginal Hessian Identity}
\label{appendix:marginal-hessian-identity}

To show the marginal Hessian identity in \Cref{lem:marginal-hessian-identity}, we first prove a minor differentiation-under-the-integral lemma for functions.
\begin{lemma}
    \label{lem:leibniz}
    Under the joint density regularity assumptions (\Cref{assump:joint_density_regularity}), the following differentiability-under-the-integral identities hold for every $\bfy\in\Rbb^\ny$:
    \begin{align}
        \label{eq:leibniz-pmarg}
        \nabla_{\bfy}\pmarg(\bfy)
        &= \int_{\Rbb^\nx}\nabla_{\bfy}\pjoint(\bfx,\bfy)\,\d\bfx, \\
        \label{eq:leibniz-conditional-grad}
        \nabla_{\bfy}\!\int_{\Rbb^\nx}\nabla_{\bfy} V(\bfx,\bfy)\,\pcond(\bfx\mid\bfy)\,\d\bfx
        &= \int_{\Rbb^\nx}\nabla_{\bfy}\!\bigl(\nabla_{\bfy} V(\bfx,\bfy)\,\pcond(\bfx\mid\bfy)\bigr)\,\d\bfx.
    \end{align}
\end{lemma}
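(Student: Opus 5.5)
The plan is to justify both identities with the standard dominated-convergence criterion for differentiating under the integral sign. If $g(\bfx,\bfy)$ is differentiable in $\bfy$ for a.e.\ $\bfx$, and $\sup_{\bfy'\in B(\bfy,r)}\|\nabla_\bfy g(\bfx,\bfy')\|$ is integrable in $\bfx$ for some ball $B(\bfy,r)$, then $\nabla_\bfy\int g\,\d\bfx=\int\nabla_\bfy g\,\d\bfx$. The mean value theorem turns difference quotients into values of $\nabla_\bfy g$ on the ball, and dominated convergence finishes. Everything therefore reduces to producing locally-uniform-in-$\bfy$ integrable envelopes in $\bfx$.

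The first step is to obtain Gaussian-type tail control on $\pjoint$ that is uniform for $\bfy$ near a fixed $\bfy_0$.
\begin{itemize}
\item By \ref{assump:log_concave}, $V=V_{\rm joint}$ is convex.
\item By \ref{assump:isoperimetric} (or simply integrability of $\pjoint$), $\pjoint$ is a log-concave probability density.
\item Hence $V$ grows at least linearly: there exist $a>0$ and $b$ with $V(\bfx,\bfy)\ge a\|(\bfx,\bfy)\|-b$. This is the classical fact that an integrable log-concave density has exponentially decaying tails.
\item Consequently $\pjoint(\bfx,\bfy)\le e^{b}e^{-a\|\bfx\|}$ uniformly in $\bfy$.
\end{itemize}

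The second step controls the derivatives via \ref{assump:L_smooth}. Since $\nabla^2V\preceq L\bfI$ and $V$ is convex, $\nabla V$ is $L$-Lipschitz. Therefore $\|\nabla_\bfy V(\bfx,\bfy')\|\le C(1+\|\bfx\|)$ and $\|\nabla^2_\bfy V(\bfx,\bfy')\|\le L$ for all $\bfy'$ in a bounded ball. The integrands then have the following forms:
\begin{itemize}
\item For \eqref{eq:leibniz-pmarg}, the integrand is $\nabla_\bfy\pjoint=-\nabla_\bfy V\,\pjoint$, bounded by $C(1+\|\bfx\|)e^{b}e^{-a\|\bfx\|}$, which is integrable.
\item For \eqref{eq:leibniz-conditional-grad}, write $\pcond(\bfx\mid\bfy)=\pjoint(\bfx,\bfy)/\pmarg(\bfy)$. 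The derivative of $\nabla_\bfy V\,\pcond$ involves $\nabla^2_\bfy V\,\pcond$, $\nabla_\bfy V(\nabla_\bfy V)\t\pcond$, and $\nabla_\bfy V\,\pcond\,\nabla_\bfy\log\pmarg$.
\item These terms are bounded by polynomials of degree at most two in $\|\bfx\|$ times $e^{-a\|\bfx\|}$, divided by $\pmarg$ and multiplied by $\|\nabla\pmarg\|/\pmarg$.
\end{itemize}
Note that $\pmarg$ is continuous and strictly positive: it is the integral of a positive continuous function, with continuity given by dominated convergence using the same envelope. So $\pmarg$ is bounded below on the ball, and by the first identity $\nabla\pmarg$ is continuous and bounded there. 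This yields the required integrable envelope, and the criterion gives \eqref{eq:leibniz-conditional-grad}.

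I expect the main obstacle to be the first step: the uniform exponential tail bound. It is where log-concavity is genuinely needed, since smoothness alone would not rule out heavy tails in $\bfx$. I would first prove it via the standard argument that a convex $V$ with $\int e^{-V}<\infty$ must have a minimizer and must grow linearly along every ray, with a uniform rate by compactness of the sphere. I would then remark that the uniformity over $\bfy$ in a ball comes for free because the bound is in the joint norm.
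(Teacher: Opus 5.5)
Your proposal is correct and follows essentially the same route as the paper's sketch. Both use an exponential envelope $\pjoint\le e^{b}e^{-a\|\bfx\|}$ from log-concavity, which the paper cites from Saumard--Wellner rather than proving. Both get linear growth of $\nabla V$ and boundedness of $\nabla^2 V$ from convexity plus $L$-smoothness, then apply dominated convergence with an envelope of the form $C(1+\|\bfx\|^2)e^{-a\|\bfx\|}$ on bounded sets of $\bfy$.

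Your explicit handling of the $1/\pmarg$ factor in the second identity fills in a step the paper leaves implicit. That is, you show $\pmarg$ is positive and continuous, hence bounded below on balls, and that $\nabla\pmarg$ is locally bounded.
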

\begin{proof}[Proof sketch]
Note, every log-concave density admits an exponential envelope $\pjoint(\bfz)\leq \exp(-a\|\bfz\|+b)$ for some $a>0$ and $b\in\bbR$ \cite[Theorem~5.1]{saumard_log-concavity_2014}.
Moreover, finite smoothness implies $\|\nabla V(\bfz)\|\leq C(1+\|\bfz\|)$ and $\|\nabla^2V(\bfz)\|\leq L(\pjoint)$.
Consequently, on every bounded set of $\bfy$-values, the integrands and their $\bfy$-derivatives in \eqref{eq:leibniz-pmarg}--\eqref{eq:leibniz-conditional-grad} are dominated by an integrable function of the form $C(1+\|\bfx\|^2)e^{-a\|\bfx\|}$.
The identities, therefore, follow by applying the differentiation-under-the-integral theorem \cite[Theorem~2.27]{folland_real_1999} coordinate-wise.
\end{proof}

\begin{proof}[Proof of~\Cref{lem:marginal-hessian-identity}]
	We first compute the gradient of $V_{\rm marg}(\bfy) = -\log \pmarg(\bfy)$ using \Cref{lem:leibniz}:
		\begin{align}
			\begin{split}
			\nabla V_{\rm marg}(\bfy)
				=-\frac{\nabla \pmarg(\bfy)}{\pmarg(\bfy)}
				= -\frac{\int_{\Rbb^\nx}  \nabla_{\bfy}\log \pjoint(\bfx, \bfy) \pjoint(\bfx, \bfy)\,\d\bfx}{\pmarg(\bfy)}.
			\end{split}
		\end{align}
		Using the conditional-marginal factorization~\eqref{eq:joint_factorization}, we get
		\begin{align}\label{eq:grad_V_marg_mean}
			\nabla V_{\rm marg}(\bfy)
				= -\int_{\Rbb^\nx}  \nabla_{\bfy}\log \pjoint(\bfx, \bfy) \cdot \pcond(\bfx \mid \bfy) \,\d\bfx
				= \Ebb_{\bfx\sim \pcond(\cdot \mid \bfy)}\left[ \nabla_{\bfy}V(\bfx, \bfy) \right]
		\end{align}
	where $V(\bfx, \bfy) = -\log \pjoint(\bfx, \bfy)$.
	Differentiating again, using \Cref{lem:leibniz}, we obtain
		\begin{align}\label{eq:hessian_Vmarg_step}
		\nabla^2 V_{\rm marg}(\bfy)
				&= \int_{\Rbb^\nx}  \nabla_{\bfy}^2 V(\bfx, \bfy) \pcond(\bfx \mid \bfy) \,\d\bfx +  \int_{\Rbb^\nx}  \nabla_{\bfy} V(\bfx, \bfy) \nabla_{\bfy}\pcond(\bfx \mid \bfy)^\top \,\d\bfx.
		\end{align}
	By differentiating through the log of the standard factorization~\eqref{eq:joint_factorization}, we see that
		\begin{align*}%
		\nabla_{\bfy} \log\pcond(\bfx \mid \bfy) = -\nabla_{\bfy} V(\bfx, \bfy) + \nabla V_{\rm marg}(\bfy),
		\end{align*}
	hence, recalling~\eqref{eq:grad_V_marg_mean}, we obtain
    \begin{equation}
    \label{eq:covariance_term}
    \begin{split}
        \int_{\Rbb^\nx} \nabla_{\bfy} V(\bfx, \bfy) \nabla_{\bfy}\pcond(\bfx \mid &\bfy)^\top\,\d\bfx=-\int_{\bbR^\nx}\nabla_\bfy V(\bfx,\bfy)\nabla_\bfy V(\bfx,\bfy)^\top \pcond(\bfx\mid\bfy)d\bfx\\
        &
        + \Ebb_{\bfx\sim \pcond(\cdot \mid \bfy)}\left[ \nabla_{\bfy}V(\bfx, \bfy) \right]\Ebb_{\bfx\sim \pcond(\cdot \mid \bfy)}\left[ \nabla_{\bfy}V(\bfx, \bfy) \right]^\top.
        \end{split}
    \end{equation}
    Substituting~\eqref{eq:covariance_term} into~\eqref{eq:hessian_Vmarg_step},  the marginal Hessian simplifies to the desired form.
\end{proof}

\section{Bayesian Image Deblurring: Efficient Spectral Evaluations}
\label{app:deblur-spectral}
We describe how elements of \MarCo can be computed efficiently for the image deblurring example by exploiting the eigendecompositions of $\bfB(\bfy)$ and $\bfS\t \bfS$, whereby they share the two-dimensional DFT eigenbasis $\bfF\in\bbC^{\nx\times\nx}$.  That is,
\begin{equation}
\label{eq:spectral}
   \bfB(\bfy) = \bfF^* \bfLambda_b(\bfy) \bfF \quad \mbox{and} \quad \bfS\t \bfS = \bfF^* \bfLambda_s \bfF,
\end{equation}
where diagonal matrices  $\bfLambda_b(\bfy)$ and $\bfLambda_s$ contain eigenvalues of $\bfB(\bfy)$ and $\bfS\t \bfS$ respectively \cite{hansen_deblurring_2006}.

The conditional is given by $\pcond(\bfx \mid \bfy, \bfd) \sim\calN(\bfmu_{\rm cond}(\bfy),\bfSigma_{\rm cond}(\bfy))$, where 
$\bfSigma_{\rm cond}(\bfy) = (\sigma_{e}^{-2}\bfB(\bfy)\t \bfB(\bfy) + \sigma_x^{-2}\bfS\t \bfS )^{-1} = \bfF^* (\sigma_{e}^{-2} |\bfLambda_b(\bfy)|^2 +\sigma_x^{-2} \bfLambda_s)^{-1} \bfF$, and 
the conditional mean can be computed in the Fourier space as
\begin{align}
    \bfmu_{\rm cond}(\bfy)&=\bfSigma_{\rm cond}(\bfy)(\sigma_{e}^{-2}\bfB(\bfy)\t \bfd + \sigma_x^{-2} \bfS\t \bfS \bfmu_x) \\
    &= \bfF^* (\sigma_{e}^{-2} |\bfLambda_b(\bfy)|^2 +\sigma_x^{-2} \bfLambda_s)^{-1}(\sigma_{e}^{-2}\overline{\bfLambda_b(\bfy)}\t \bfF \bfd + \sigma_x^{-2} \bfLambda_s \bfF \bfmu_x),
\end{align}
where $\overline{(\cdot)}$ denotes complex conjugate.
Moreover, an exact conditional sample can be obtained in $\bigO{\nx\log\nx}$ as $\bfmu_{\rm cond}(\bfy) + \bfSigma_{\rm cond}(\bfy)^{1/2} \bfz$
where $\bfz\sim\calN(\bfzero,\bfI_\nx)$.

The marginal \eqref{eq:deblur_marg} can also be evaluated efficiently using \eqref{eq:spectral}, since
\begin{equation}
    \bfH(\bfy) = \sigma_e^{-2}\bfB(\bfy)\t\bfB(\bfy)
    +\sigma_x^{-2}\bfS\t\bfS
    = \bfF^* ( \sigma_e^{-2} |\bfLambda_b(\bfy)|^2 + \sigma_x^{-2} \bfLambda_s) \bfF .
\end{equation}
This enables efficient computation of the terms in \eqref{eq:deblur_marg}, such that evaluating $V_{\rm marg}(\bfy)$ costs
$\bigO{\nx\log\nx}$ (dominated by the FFTs). Gradients can be obtained via the chain rule and incur no additional FFTs.

\end{document}